\documentclass[12pt,a4paper,reqno]{amsart}

\usepackage{algpseudocode}

\usepackage{amsthm}
\usepackage{tikz}
\usepackage{fullpage}
\usepackage{float}
\usepackage{multirow}
\usepackage{amsmath}
\usepackage{amssymb}
\usepackage{amsthm}
\usepackage{amscd}
\usepackage{amsfonts}
\usepackage{array}
\usepackage{hyperref}
\usepackage{url}
\usepackage{mleftright} 
\usepackage{float}
\usepackage{multirow}
\usepackage{hyperref}
\usepackage{ulem}
\usepackage{soul}
\PassOptionsToPackage{lowtilde}{url}
\usepackage{xcolor}
\usepackage{cancel}

\newtheorem{theorem}{Theorem}[section]
\newtheorem{lemma}[theorem]{Lemma}
\newtheorem{corollary}[theorem]{Corollary}
\newtheorem{proposition}[theorem]{Proposition}
\newtheorem{conjecture}[theorem]{Conjecture}
\newtheorem{example}[theorem]{Example}

\def\1{{\bf 1}}

\def\vu{\mbox{\boldmath $u$}}
\def\vv{\mbox{\boldmath $v$}}

\DeclareMathOperator{\diam}{diam}
\DeclareMathOperator{\rad}{rad}

\DeclareMathOperator{\spec}{sp}

\def\BB{\mbox{\boldmath $B$}}
\def\Q{\mbox{\boldmath $Q$}}
\def\I{\mbox{\boldmath $I$}}

\def\Z{\mathbb Z}
\newcommand{\cal}{\mathcal}

\def\purple{\textcolor{purple}}

\definecolor{greeen}{RGB}{47, 141, 26}

\usepackage{notation}

\date{}
\begin{document}
\title{On supertoken graphs}

\author{M\'onica A. Reyes}
\address{Departament de Matem\`atica, Universitat de Lleida, Igualada (Barcelona), Catalonia}
\email{monicaandrea.reyes@udl.cat}

\author{Cristina Dalf\'o}
\address{Departament de Matem\`atica, Universitat de Lleida, Igualada (Barcelona), Catalonia}
\email{cristina.dalfo@udl.cat}

\author{Miquel \`Angel Fiol}
\address{Departament de Matem\`atiques, Universitat Politècnica de Catalunya, Barcelona, Catalonia; Barcelona Graduate School of Mathematics, Barcelona, Catalonia; Institut de Matem\`atiques de la UPC-BarcelonaTech (IMTech), Barcelona, Catalonia}
\email{miguel.angel.fiol@upc.edu}

\begin{abstract}
We generalize the concept of token graphs to obtain supertoken graphs. In the latter case, there can be more than one token in a vertex. We formally define supertoken graphs and establish their basic properties.
Moreover, we provide some bounds and exact values on the independence number, clique number, and chromatic number of these graphs.  
Finally, we construct a new infinite family of graphs, which we call the $p$-augmented 2-token graphs of cycles, and study their properties, including the spectral radius or largest adjacency eigenvalue.
\end{abstract}

\maketitle

\noindent \textbf{Keywords:} Supertoken graph, 
Independence number, Diameter, {Clique number,} Spectrum, {Spectral radius}, Chromatic number.\\


\section{Introduction}
\label{sec:supertoken}

Let us consider the following communication model. Given a set of $n$ possible symbols, corresponding to the vertices of a graph $G$, we assume that instead of single symbols or (ordered) strings of them, we transmit multisets of $k$ (not necessarily different) symbols. Moreover, we suppose that the probability of error in each symbol is small enough to assume that, in the transmission of a multiset, at most one symbol is confused. In other words, the probability of two or more wrong symbols can be neglected.
In this framework, the so-called confusability graph is a $k$-supertoken graph, whose definition follows.\\
Given a graph $G$ on $n$ vertices and an integer $k\ge 1$, 
 the \textit{$k$-supertoken graph}$F_1^{k\times 1}(G)$  is defined as follows: Every vertex of ${\cal F}_k(G)$ corresponds to a way of placing $k$ (indistinguishable) tokens in some of the $n$ (not necessarily distinct) vertices of $G$. 
Thus, we represent each vertex $\vu$ of ${\cal F}_k(G)$ by a (non-negative) $n$-vector or $n$-sequence
$$
\vu=(u_1,u_2,\ldots,u_n)\equiv u_1u_2\ldots u_n\quad \mbox{with } \sum_{i=1}^{n}u_i=k.
$$ 
 Moreover, two of its vertices, $\vu$ and $\vv$, are adjacent if one token of the multiset representing $\vu$ is moved along an edge of $G$ to another vertex, so obtaining the multiset representing $\vv$. Then, notice that (the multisets of) $\vu$ and $\vv$ have $k-1$ elements in common.
This kind of token graph was introduced by Hammack and Smith~\cite{HaSm17}, who named them \textit{reduced $k$-th power of graphs}, and provided
a construction of minimum cycle bases for them.

Supertoken graphs are a particular case of a broader family that we call 
\textit{generalized token graphs}.
In such graphs, each vertex represents a configuration of $k$ (undistinguished or distinguished) tokens placed on some (different or equal) vertices of $G$. Moreover, two vertices (or configurations) are adjacent when one configuration can be obtained from the other by moving in $G$ (one or more) tokens along the incident edges. {Depending on the nature and position of the tokens and the adjacency rules listed in Table \ref{tab:tokens}, we have the following different families of generalized token graphs.}

\begin{itemize}
\item
$F_k(G)$: The \textit{$k$-token graph} of $G$ (Fabila-Monroy, Flores-Pe\~naloza, Huemer, Hurtado, Urrutia, and Wood \cite{ffhhuw12}) or \textit{$k$-th symmetric power} of $G$ (Audenaert, Godsil, Royle, and Rudolph   \cite{agrr07});
\item
${\cal F}_k(G)$: The \textit{$k$-supertoken graph} of $G$ (Baskoro, Dalf\'o, Fiol, and Simanjuntak \cite{bdfs24}) or \textit{$k$-th reduced power graph} of $G$ (Hammack and Smith \cite{HaSm17});
\item
$G^{\Box k}$: \textit{$k$-th direct product graph} of $G$ (that is, the direct product of $G$ by itself $k$ times);
\item
$G^{\boxtimes k}$: \textit{$k$-th strong product graph} of $G$ (or, in information theory, the \textit{$k$-th confusability graph} of $G$). 
\end{itemize}

\begin{figure}[t]
	\begin{center}
		\includegraphics[width=6cm]{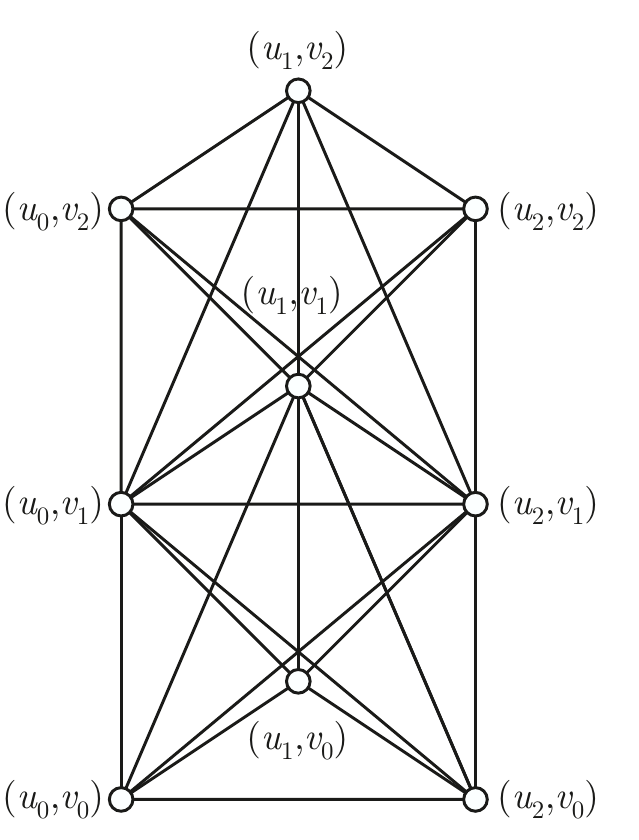}
	\end{center}
	\vskip-.25cm
	\caption{The strong product $C_3\boxtimes P_3$.}
	\label{fig:C_3xP3}
\end{figure}

\begin{table}[t]
\begin{center}
\begin{tabular}{|c|c|c|c|}
 \hline
Graph operation & Distinguished tokens & 
Max. no. of tokens & Max. no. of tokens\\
 & & on each vertex &  moved at each step\\
 \hline
$F_k(G)$ & No & 1 & 1 \\
\hline
${\cal F}_k(G)$ & No & $k$ & 1\\
 \hline
 $G^{\Box k}$ & Yes & $k$ & 1 \\
  \hline
  $G^{\boxtimes k}$ & Yes & $k$ & $k$\\
 \hline
\end{tabular}
\end{center}
\caption{Different operations of a graph with itself defining a family of generalized token graphs.}
\label{tab:tokens}
\end{table}

Recall that, given the graphs $G_1, \ldots,G_k$, their \textit{strong product}, denoted by $G_1\boxtimes \cdots \boxtimes G_k$, is the graph with vertex set the Cartesian product $V(G_1) \times \cdots \times V(G_k)$, where vertices $(v_1, \ldots, v_k)$ and $(u_1, \ldots, u_k)$ are adjacent if and only if for every $i \in [1,k]=\{1,\ldots,k\}$,
either $v_i = u_i$ or $v_iu_i \in E(G_i)$. As shown in Table \ref{tab:tokens}, we denote the $k$ strong products of $G$ by itself as $G^{\boxtimes k}$. Figure \ref{fig:C_3xP3} illustrates the strong product of the cycle $C_3$ with the path $P_3$. 

Lov\'asz \cite{l79} showed that $\sqrt[k]{\alpha(C_n^{\boxtimes k})}\le \sqrt[2]{\alpha(C_5^{\boxtimes 2})}$ for every $k\ge 2$, {where $\alpha$ is the independence number}. This makes it interesting to study the behavior of $\alpha(C_n^{\boxtimes 2})$ for a cycle $C_n$ with $n>5$. With this aim in mind, let us consider other interpretations. First, it is known that $a(n):=\alpha(C_n^{\boxtimes 2})=\lfloor\frac{n}{2}\lfloor\frac{n}{2}\rfloor\rfloor$. 
The sequence 
$$
\{a(n)\}_{n\ge 1}=\{1,1,1,4,5,9,10,16, 18,25,27,36,39,49,\ldots\}
$$ 
corresponds to $A189889$ in OEIS \cite{Sl}, as the maximum non-attacking kings in an $n\times n$ toroidal chessboard; see Watkins \cite[Thm. 11.1]{w07}. Thus, an alternative definition of $a(n)$ is the independence number of the Cayley graph on the Abelian group $\Z_n\times \Z_n$ with generators $(\pm e_1, \pm e_2)\neq (0,0)$, where $e_i\in \{0,1\}$ for $i=1,2$. This suggests representing the strong product $G=C_n\boxtimes C_n$ as a graph associated with plane tessellations as follows (see Yebra, Fiol, Morillo, and Alegre  \cite{yfma85}). Every vertex of $G$ is represented by a unit square with center a lattice point $(i,j)$, for $i,j\in \Z_n$, and $(i,j)$ is adjacent to the eight vertices $(i\pm 1,j)$, $(i,j\pm 1)$, $(\pm i,\pm j)$ and $(\pm i,\mp j)$. Then, two vertices are adjacent if one of their coordinates differs at least by 1 unit. Using this approach, Figure \ref{fig:C7XC7} (left) shows a maximum independent set of 10 vertices in $C_7\boxtimes C_7$.
Curiously enough, as it was shown by de Alba, Carballosa,  Lea\~nos, and Rivera \cite{aclr20}, $a(n)$ is also the independence number of the 2-token graph $F_2(C_n)$ of the cycle $C_n$. In Figure \ref{fig:C7XC7} (middle), we show $F_2(C_7)$ with its 10 independent vertices (the white vertices). It would be interesting to find a correspondence between the independent vertices of $C_n\boxtimes C_n$ and $F_2(C_n)$. By considering this last interpretation, we can also say that $a(n)$ is the maximum number of edges of an $n$-cycle graph $C_n^+$ with chords not containing any triangle with the edges of the cycle, see Figure \ref{fig:C7XC7} (middle and right), where the equivalence between the independent vertices of $F_2(C_7)$ and the edges of $C_7^+$ is apparent.

\begin{figure}[t]
	\begin{center}
 \includegraphics[width=15cm]{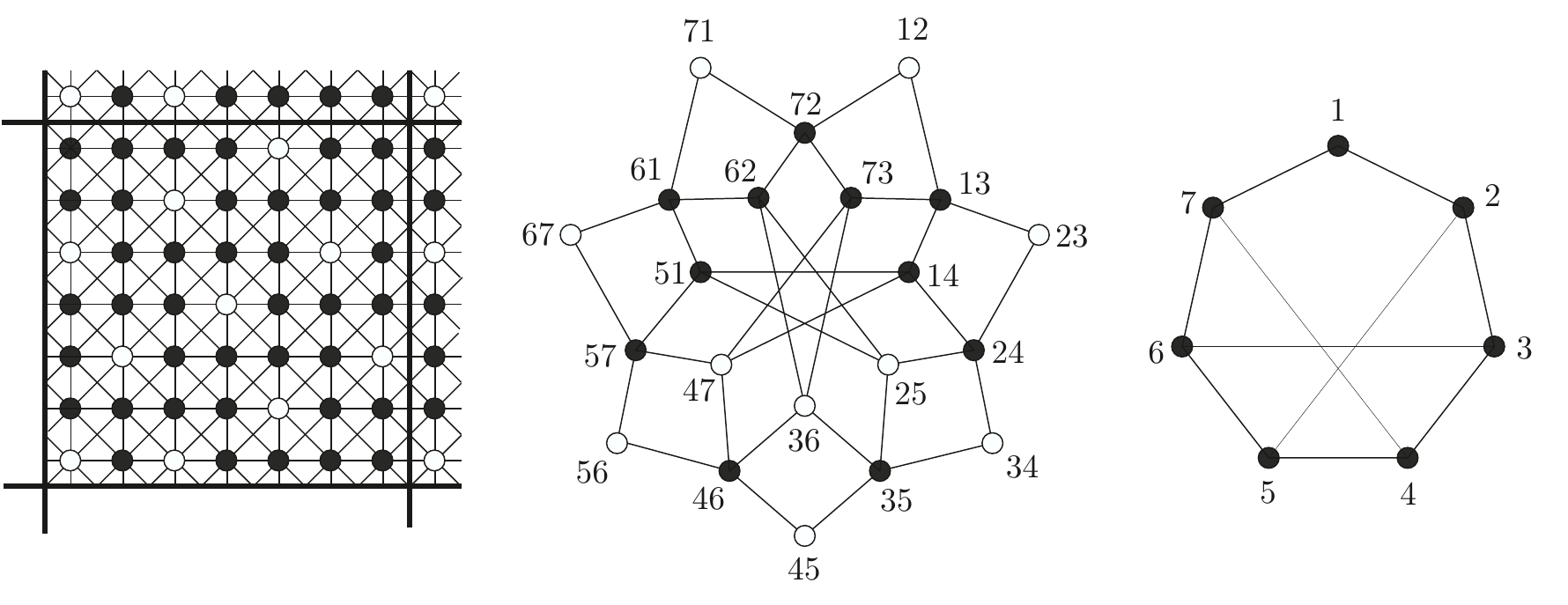}
	\end{center}
	\vskip-.25cm
	\caption{Left: A maximum independent set in  $C_7\boxtimes C_7$ (white vertices). Middle: A maximum independent set in $F_2(C_7)$ (white vertices). Right: The cycle $C_7$ with three chords and no triangles.}
	\label{fig:C7XC7}
\end{figure}

This paper is structured as follows. In the following section, we provide the basic properties of supertoken graphs. In Section \ref{sec:ind-number}, we give a lower bound on the independent number of supertoken graphs. In Sections \ref{sec:clique} and \ref{sec:chrom}, the clique number {and chromatic number} of these graphs are established, respectively. Finally, in Section \ref{sec:aug-token-cicles}, we construct a new infinite family of graphs that we call the $p$-augmented 2-token graphs of cycles, and we study their properties, among them, the spectral radius, that is, their largest adjacency eigenvalue.


\section{Fundamental properties of supertoken graphs}

Let $G$ be a graph with $n$ vertices. The number of vertices of ${\cal F}_k(G)$ is the number of combinations with repetitions $CR_k^n$ of $n$ elements taken $k$ at a time, that is,
$$
|V({\cal F}_k(G))|=|CR_k^n|= \binom{n+k-1}{k} = \binom{n+k-1}{n-1}.
$$

To determine the number of edges in a supertoken graph, note that each edge of $G$ gives us $|CR_{k-1}^{n}| = {\binom{n+k-2}{k-1}}$ edges in the supertoken ${\cal F}_k(G)$. Consequently, if $G$ has $m$ edges, the total number of edges in ${\cal F}_k(G)$ is given by 
$$| E ({\cal F}_k(G)|=m |CR_{k-1}^{n}| = m{\binom{n+k-2}{k-1}}.$$
For more details and other generalizations of token graphs, see Song, Dalf\'o, Fiol, Mora, and Zhang \cite{sdfmz26}.
{For example, in the case of the cycle $C_n$ and the complete graph $K_n$ on $n$ vertices, the numbers of edges of their 2-supertoken graphs are $|E({\cal F}_2(C_n))|=n^2$ and $|E({\cal F}_2(K_n))|=\frac{1}{2}n^2(n-1)$. Figure \ref{k4_token_supertoken} presents the examples of the 2-supertoken and 3-supertoken graphs of $K_4$,
{where the edges of each color in $K_4$ give rise to the edges of the same color in the supertokens. Moreover,}  in the case of ${\cal F}_3(K_4)$, a regular partition of the vertex set can be constructed. This partition consists of three classes: one formed by the outer vertices with degree 3, another by all vertices with degree 6, and the last by the central vertices with degree 9. Using this regular partition, part of the Laplacian spectrum of 
${\cal F}_3(K_4)$ can be determined from the spectrum of the quotient Laplacian matrix {$\Q$} associated with the partition. {Namely,
$$
\Q =
\left(
\begin{array}{ccc}
3 & -3 & 0\\
-1 & 3 & -2\\
0 & -6 & 6
\end{array}
\right),
$$
with eigenvalues $0,6\pm\sqrt{6}$.}

\begin{figure}[h]
  \centering
\includegraphics[width=0.9\textwidth]{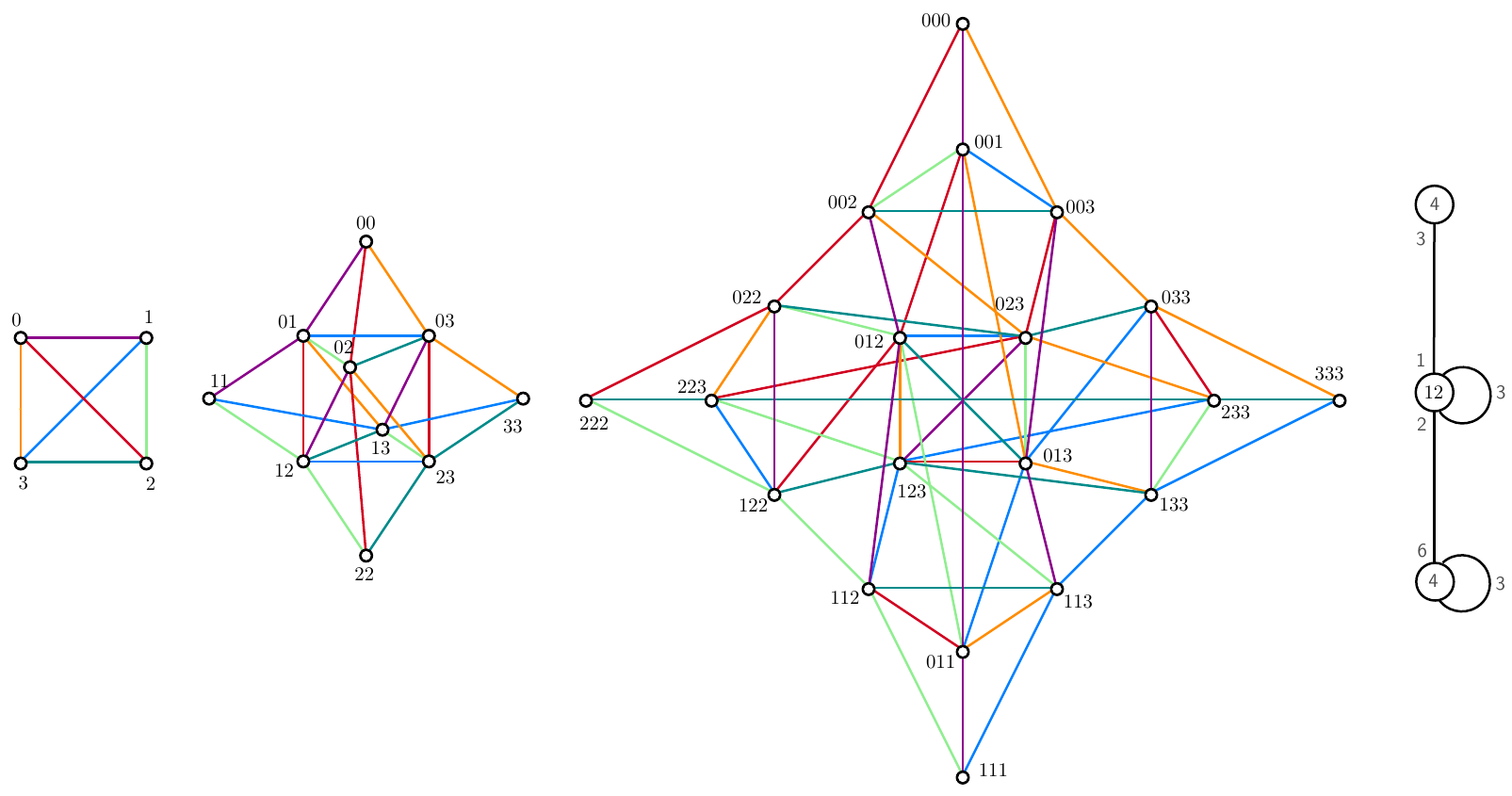}
  \caption{{The complete graph $K_4$, its $2$-supertoken, its $3$-supertoken, and a regular partition of its $3$-supertoken.}}
  \label{k4_token_supertoken}
\end{figure}

Moreover, an interesting property of the supertoken of the path graph $P_n$ is discussed in Baskoro, Dalf\'o, Fiol, and Simanjuntak \cite{bdfs24}, where it is noted that the 2-supertoken graph ${\cal F}_2(P_n)$ is isomorphic to the 2-token graph $F_2(P_{n+1})$. \\  In the same article, the authors also established several results concerning supertoken graphs ${\cal F}_k(G)$. Specifically, for a graph $G=(V,E)$ with $n$ vertices, diameter $d$, and radius $r$, they proved the following properties:
\begin{enumerate}
\renewcommand{\labelenumi}{(\roman{enumi})}
\item 
The diameter of 
${\cal F}_k(G)$ is $\diam({\cal F}_k(G)) =kd$.
\item 
The radius of ${\cal F}_k(G)$ satisfies 
$\rad({\cal F}_k(G))\leq kr$.
\item 
The metric dimension of 
${\cal F}_k(G)$ satisfies 
$\dim({\cal F}_k(G)) \leq |V|$.
\end{enumerate}
An interesting distinction arises when comparing token graphs and supertoken graphs.}
{Thus, whereas the token graph $F_k(G)$ is not, in general, a subgraph of $F_{k+1}(G)$, the supertoken graph ${\cal F}_k(G)$ is always a subgraph of ${\cal F}_{k+1}(G)$.} Also, $F_k(G)$ is always a subgraph of ${\cal F}_{k}(G)$. 
Then, we have the following relation between their spectra.
\begin{enumerate}
\renewcommand{\labelenumi}{(\roman{enumi})}
\item 
The eigenvalues of $F_{k-1}(G)$ interlace the eigenvalues of ${\cal F}_{k-1}(G)$.
\item 
The eigenvalues of ${\cal F}_{k-1}(G)$ interlace the eigenvalues of ${\cal F}_{k}(G)$.
\end{enumerate}

\section{Independence number of supertoken graphs}
\label{sec:ind-number}

\begin{figure}[t]
	\begin{center}
	\includegraphics[width=14cm]{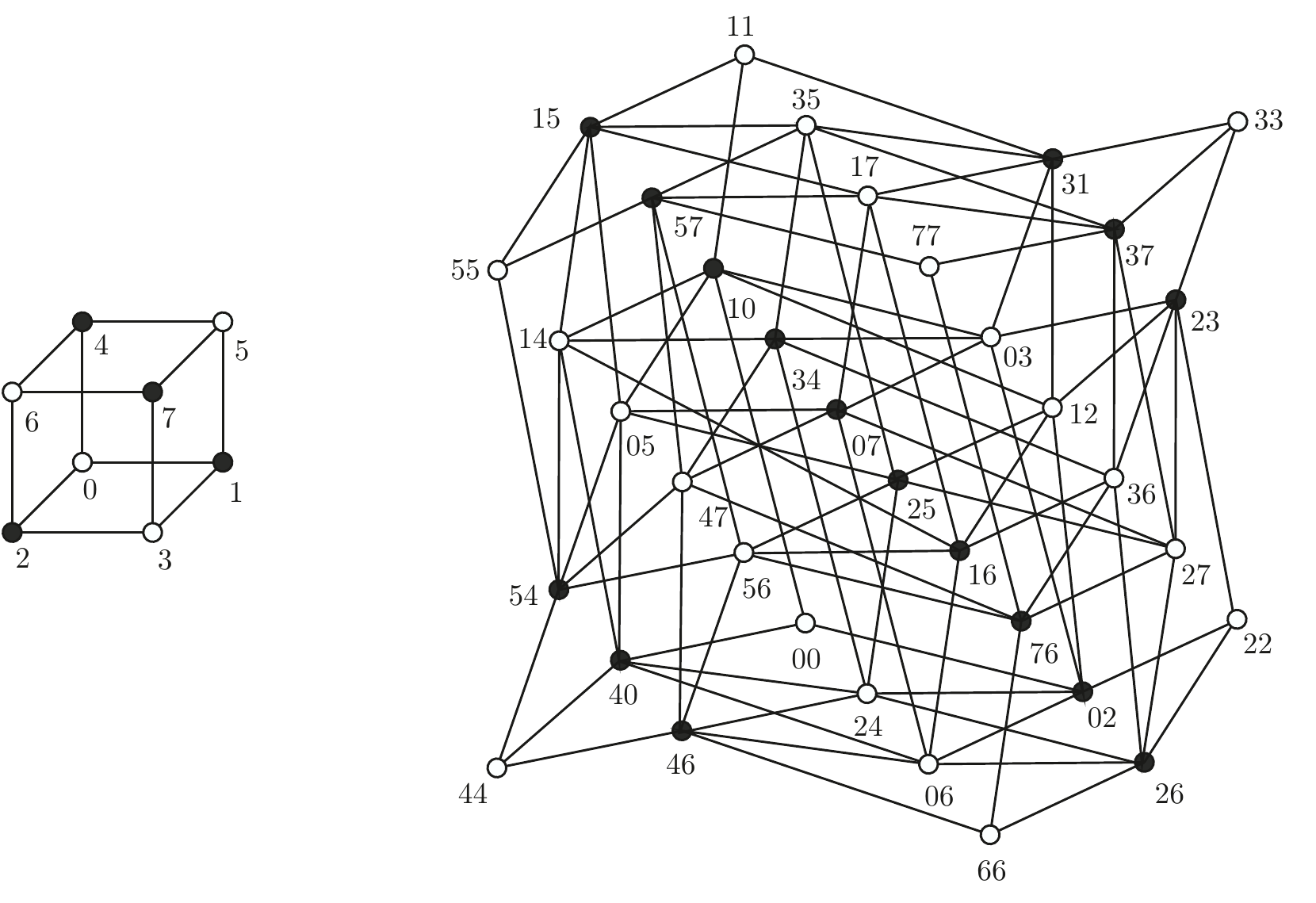}
	\end{center}
	\vskip-.5cm
	\caption{Left: The hypercube $Q_3$. Right: The 2-supertoken graph ${\cal F}_2(Q_3)$. The white vertices form a maximum independent set.}
	\label{fig:FF(Q3)}
\end{figure}

\begin{theorem}
\label{th:indFFk(G)}
Let $G$ be a graph with $n$ vertices and chromatic number $\chi$. For $i=1,\ldots,\chi$, let $c_i$ be
the number of vertices in the color class ${\cal C}_i$  of a $\chi$-coloring. 
Let ${\cal P}_1,\ldots,{\cal P}_{\sigma}$, with $\sigma\in [\lceil\chi/k\rceil,\chi]$, be a partition of the color classes. If ${\cal P}_j=\{{\cal C}_{i_1},\ldots,{\cal C}_{i_{\tau_j}}\}$, where $\tau_j\in[1,k]$, let  
\begin{equation}
\label{|P|}
[\#{\cal P}_j]:=
\prod_{\stackrel{{h\in[1,\tau_j],}\,\pi_h>0} 
{\pi_1+\cdots+\pi_{\tau_j}=k,
\pi_1\le \cdots \le\pi_{\tau_j}}}{c_{i_{h}}+\pi_h-1\choose \pi_h},
\end{equation}
where $\pi_h$ is the number of tokens in each ${\cal C}_{i_h}$ for $h=1,\ldots,\tau_j$.
Thus, the independence number of the $k$-supertoken graph  satisfies the bound
\begin{equation}
\label{alphaFF}
   \alpha({\cal F}_k(G))\ge \sum_{j=1}^{\sigma}[\#{\cal P}_j].
\end{equation} 
\end{theorem}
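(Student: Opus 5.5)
The plan is to build an explicit independent set in ${\cal F}_k(G)$ from the color classes and the partition ${\cal P}_1,\ldots,{\cal P}_\sigma$, and then count it. Throughout, a vertex $\vu$ of ${\cal F}_k(G)$ is read as a multiset of $k$ vertices of $G$. The first observation is the mechanism behind adjacency. If $\vu\sim\vv$, then $\vv$ arises from $\vu$ by moving one token from some $x$ to a neighbour $y$ of $G$. Since $x$ and $y$ are adjacent in $G$, they lie in different color classes. Hence the \emph{color profile} of the configuration, namely the vector $(\pi_1,\ldots,\pi_\chi)$ recording how many tokens sit in each class ${\cal C}_i$, changes: one coordinate decreases by one and another increases by one. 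Two configurations with the same color profile are therefore never adjacent. More generally, two configurations can be adjacent only if their profiles differ by exactly a transfer of one token between two classes, so the profiles are at $\ell_1$-distance $2$.

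For each block ${\cal P}_j=\{{\cal C}_{i_1},\ldots,{\cal C}_{i_{\tau_j}}\}$, I would fix one admissible profile. This means nonnegative integers $\pi_1\le\cdots\le\pi_{\tau_j}$ with $\pi_1+\cdots+\pi_{\tau_j}=k$, placed on the classes of ${\cal P}_j$ and zero on all other classes. I then take $I_j$ to be the set of all configurations with exactly this profile. Counting $I_j$ is routine: the $\pi_h$ tokens in ${\cal C}_{i_h}$ form an arbitrary multiset of size $\pi_h$ from $c_{i_h}$ vertices. This gives ${c_{i_h}+\pi_h-1\choose \pi_h}$ choices, and the choices are independent across $h$, so $|I_j|$ is the product in \eqref{|P|}. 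By the first observation, each $I_j$ is independent. Different $j$ use disjoint sets of classes, so the $I_j$ are pairwise disjoint, and the union $I=\bigcup_j I_j$ has size $\sum_j[\#{\cal P}_j]$.

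The main obstacle is showing that there are no edges between $I_j$ and $I_{j'}$ for $j\ne j'$. A configuration in $I_j$ has all $k$ tokens inside the classes of ${\cal P}_j$, and one in $I_{j'}$ has all of them inside the classes of ${\cal P}_{j'}$. These class sets are disjoint, so the two profiles have disjoint supports. Their $\ell_1$-distance is therefore $2k$, which exceeds $2$ when $k\ge2$, so no single token move connects them. The case $k=1$ is trivial: ${\cal F}_1(G)=G$, and each $I_j$ is a single color class of $G$.

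I expect the delicate point to be the interpretation of \eqref{|P|}: the product must be taken for a single chosen profile per block, not summed over profiles. Profiles within one block can differ by a single transfer, which would create edges. I would state explicitly that a profile maximizing the product (or any fixed one) is chosen for each block, which yields \eqref{alphaFF}.
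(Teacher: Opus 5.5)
Your argument is correct for $k\ge 2$ and is essentially the paper's proof. You fix one token profile per block ${\cal P}_j$; reading \eqref{|P|} as a single chosen profile rather than a sum over profiles is also how the paper's proof and its examples use it. Configurations with the same profile are then non-adjacent, because a move along an edge of $G$ changes color class. Configurations from different blocks cannot be joined by a single token move.

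The one false step is your claim that $k=1$ is trivial. For $k=1$ we have $\sigma\in[\chi,\chi]$, so every block is a single class with $\pi_1=1$. Then $[\#{\cal P}_j]=c_{i_j}$ and the right-hand side of \eqref{alphaFF} equals $n$. Your set $I=\bigcup_j I_j$ is all of $V(G)$, which is independent only if $G$ is edgeless.

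This is exactly where your own $\ell_1$ count breaks down. Disjoint supports give distance $2k=2$, and distance $2$ does allow a single move. So the inequality is actually false for $k=1$ whenever $G$ has an edge, and the statement must be read with $k\ge 2$. The paper's proof makes the same implicit assumption when it says at least two tokens must be moved between blocks. You should restrict to $k\ge 2$ explicitly instead of asserting that the case $k=1$ holds.
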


\begin{proof}
Let us exhibit an independent set of ${\cal F}_k(G)$ with such many vertices.
Recall that ${c_{i_{h}}+\pi_h-1\choose \pi_h}$ is the number of $c_{i_h}$ vertices taken $\pi_h$ at a time. 
Given some ${\cal P}_j=\{{\cal C}_{i_1},\ldots,{\cal C}_{i_{\tau_j}}\}$, with $\tau_j\in[1,k]$, consider the positive integers $\pi_1,\ldots,\pi_{\tau_j}$ such that $\pi_1+\cdots+\pi_{\tau_j}=k$ and $\pi_1\le\cdots\le\pi_{\tau_j}$.  Let $\vu$ and $\vv$ be two vertices of ${\cal F}_k(G)$ representing two different configurations of $k$ tokens in some vertices of ${\cal P}_j=\{{\cal C}_{i_1},\ldots,{\cal C}_{i_{\tau_j}}\}$, where we take  $\pi_h$ tokens in each ${\cal C}_{i_h}$ for $h=1,\ldots,\tau_j$. Then, such configurations differ at least in two tokens placed in different vertices of the same color class ${\cal C}_{i_h}$. Since such vertices are independent, a path between $\vu$ and $\vv$ has a length of at least two, and hence, $\vu$ and $\vv$ are non-adjacent.\\
Now suppose that $\vu$ and $\vv$ are two vertices of 
${\cal F}_k(G)$ corresponding to configurations of $k$ tokens in different sets ${\cal P}_j$ and ${\cal P}_k$ of the partition.
Then, to go from $\vu$ to $\vv$, 
we need to move at least two tokens between color classes in  ${\cal P}_j$ and ${\cal P}_k$. Thus, $\vu$ and $\vv$ are again non-adjacent.\\
We obtain the expected result by summing over all configurations in this way.
\end{proof}

For the sake of clarity, consider the two extreme cases $\tau_j=1$ and $\tau_j=k$. 
\begin{itemize}
\item 
If $\tau_j=1$, for every $j=1,\ldots,\sigma$, we have that $\sigma=\chi$ and   ${\cal P}_j={\{{\cal C}_{i_j}\}}$,
with {${\cal C}_{i_j}$} having {$c_{i_j}$} vertices. Then, $[\#{\cal P}_j]={{c_{i_j}}+k-1\choose k}$ (we take the $k$ tokens in the same color class).
\item 
If $\tau_j=k$ for some $j=1,\ldots,\sigma$, we have  ${\cal P}_j=\{{\cal C}_{i_1},\ldots,{\cal C}_{i_{k}}\}$ and \eqref{|P|} gives $[\#{\cal P}_j]=\prod_{h=1}^k c_{i_h}$ (we take one token for each color class). 
\end{itemize}

Let us see a pair of examples.
\begin{example}
In $G=Q_3$, the cube graph, we have $n=8$, $\chi(G)=2$, and $c_1=c_2=4$.
Then, its 2-supertoken ${\cal F}_2(G)$ has ${9\choose 2}=36$ vertices, and, depending on the two possible partitions (since any number associated with the partition class must be at most $k=2$), we have the following table. 
\begin{center}
\renewcommand{\arraystretch}{1.3}
\begin{tabular}{|c|c|}
\hline
 Color class    &  Bound \eqref{alphaFF}\\[-.2cm]
 partition    &  \\
 \hline
 $1+1$   &  $2\cdot{4+2-1\choose 2}=20$\\
 \hline
 $2$      &   $4^2=16$\\
 \hline
\end{tabular} 
    \end{center}
Note that the color class partition $1+1$ means that we have two color classes (for example, ${\cal C}_1$ and  ${\cal C}_2$), while color class $2$ means that we only have one color class (for example, ${\cal C}_1$).
Thus, $\alpha({\cal F}_2(G))\ge 2{5\choose 2}=20$. In fact, since ${\cal F}_2(G)$ is bipartite, equality holds. In Figure \ref{fig:FF(Q3)}, we show $Q_3$ and ${\cal F}_2(Q_3)$. 
In the area of information theory, if $Q_3$ is used as our confusability graph, the information rate is  $\log_2\alpha(Q_3)=2$. Moreover, in ${\cal F}_2(G)$, the information rate of strings of length $k=2$ per symbol is
at least
$$
\frac{\log_2(\alpha({\cal F}_2(G)))}{2}=\log_2\sqrt{20}\approx 2.1609.
$$
In contrast, it can be shown that
$$
\frac{\log_2(\alpha({\cal F}_k(G)))}{k}=\log_2\sqrt[k]{\log_2 2{k+3\choose k}}<2,
$$
that is, in this case, the minimum information rate is attained with multisets of $k=2$ symbols.
\end{example}

\begin{figure}[t]
	\begin{center}
		\includegraphics[width=6cm]{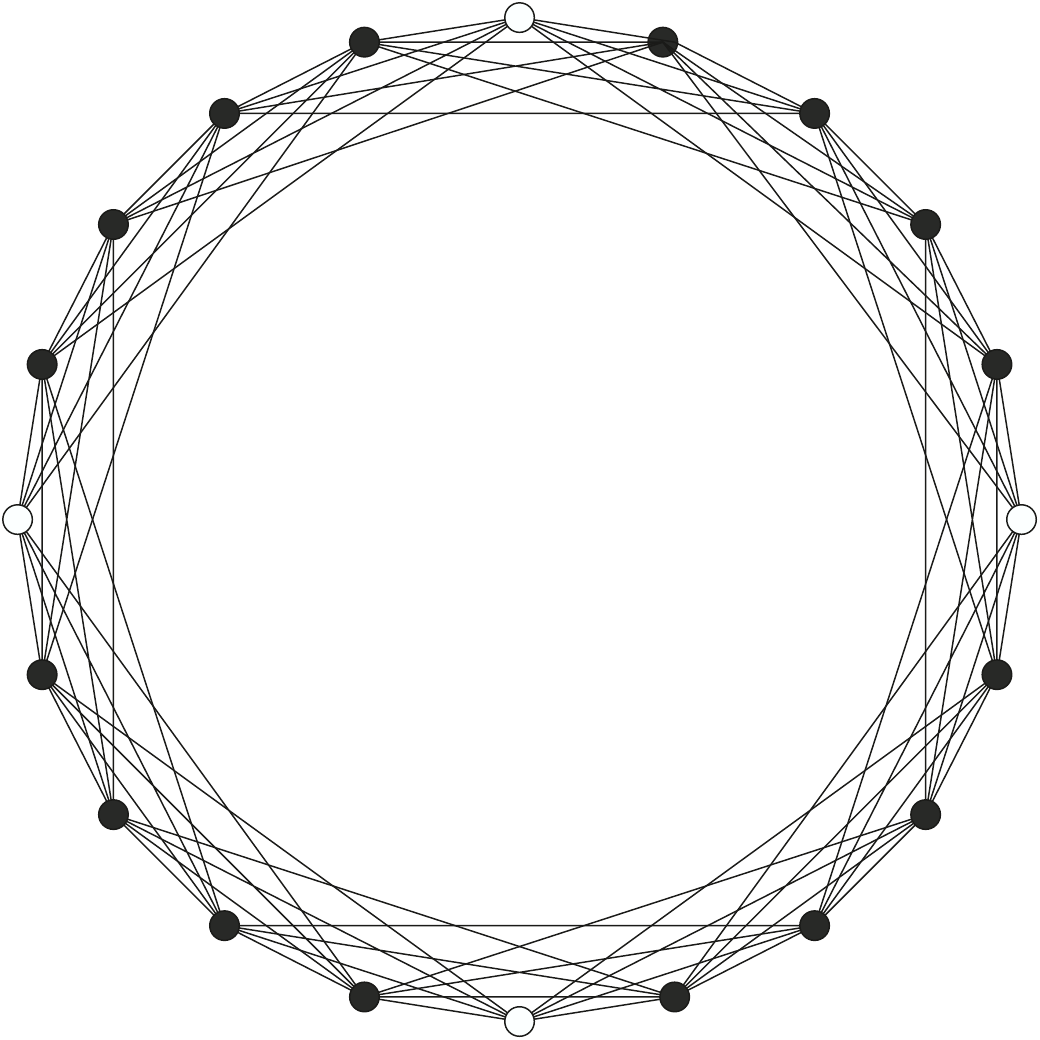}
	\end{center}
	\caption{The graph $C_{20}^4$ with $\alpha(C_{20}^4)=4$ (the white vertices). We have $\alpha({\cal F}_3(C_{20}^4))\geq 104$.}
	\label{fig:C_20^4}
\end{figure}

\begin{example}
In $G=C_{20}^4$, the 4-th power graph of the cycle on 20 vertices (see Figure \ref{fig:C_20^4}), we have $n=20$, $\chi(G)=5$, and $c_1=\cdots=c_5=4$.
Then, its 3-supertoken ${\cal F}_3(G)$ has ${20+3-1\choose 3}=1540$ vertices. Then, depending on the partition of the color classes, we obtain the following bounds for $\alpha({\cal F}_3(G))$: 

\begin{center}
\renewcommand{\arraystretch}{1.3}
\begin{tabular}{|c|c|}
\hline
Color class    &  Bound \eqref{alphaFF}\\[-.2cm]
 partition    &  \\
 \hline
 $1+1+1+1+1$\   ({$\sigma=$5})  &  $5\cdot{4+3-1\choose 3}=100$\\
 \hline
 $2+1+1+1$ \   ({$\sigma=$4})    &   $4\cdot{4+2-1\choose 2}+3\cdot{4+3-1\choose 3}=100$\\
 \hline
 $2+2+1$ \  ({$\sigma=$3})       &  $2\cdot4\cdot{4+2-1\choose 2}+{4+3-1\choose 3}=100$ \\
 \hline
 $3+1+1$ \  ({$\sigma=$3})       &  $4^3+2\cdot{4+3-1\choose 3}=104$  \\
 \hline
 $3+2$ \   ({$\sigma=$2})        & $4^3+4\cdot{4+2-1\choose 2}=104$\\
 \hline
\end{tabular}
    \end{center}
    
 Then, $\alpha({\cal F}_3(G))\ge 104$. 
Moreover, if $G$ is used as our confusability graph, the information rate is  $\log_2\alpha(G)=2$, whereas, in ${\cal F}_3(G)$, the information rate of strings of length $k=3$ per symbol is at least
$$
\frac{\log_2(\alpha({\cal F}_3(G)))}{3}\ge \log_2\sqrt[3]{104}\approx 2.2334.
$$
\end{example}

These examples and some empirical evidence suggest that the maximum value in \eqref{alphaFF} is obtained when the values $\pi_1,\ldots,\pi_{\tau_j}$ are as equal as possible.

\begin{theorem}
\label{th:bip}
Let $G$ be a bipartite graph with stable sets $C_1$ and $C_2$, with cardinalities $|C_i|=c_i$ for $i=1,2$ such that $c_1\le c_2$. Then, given an integer $k\ge 2$,
the $k$-supertoken ${\cal F}_k(G)$ is bipartite with independence number satisfying
\begin{equation}
\alpha({\cal F}_k(G)) \ge \sum_{i=0}^{\lfloor k/2\rfloor}{c_1+2i-1\choose 2i}{c_2+k-2i-1\choose k-2i}.
\label{alpha(FFk(bip0)} 
\end{equation}
\end{theorem}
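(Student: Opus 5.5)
We will prove bipartiteness by exhibiting an explicit 2-coloring of ${\cal F}_k(G)$, and then obtain the bound as the size of one of its color classes. For a vertex $\vu=(u_1,\ldots,u_n)$ of ${\cal F}_k(G)$, define $t(\vu):=\sum_{x\in C_1}u_x$, the number of tokens lying in the stable set $C_1$. Moving one token along an edge of $G$ joins a vertex of $C_1$ to a vertex of $C_2$, because $C_1,C_2$ are stable. So every edge of ${\cal F}_k(G)$ joins configurations $\vu,\vv$ with $t(\vv)=t(\vu)\pm 1$. Hence the parity of $t$ is a proper 2-coloring, and ${\cal F}_k(G)$ is bipartite. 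The two parts are
$$
{\cal E}=\{\vu:\ t(\vu)\ \mbox{even}\},\qquad {\cal O}=\{\vu:\ t(\vu)\ \mbox{odd}\}.
$$

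Next we count ${\cal E}$. A configuration with exactly $t(\vu)=2i$ is determined by two independent choices:
\begin{itemize}
\item a multiset of $2i$ tokens on the $c_1$ vertices of $C_1$, giving ${c_1+2i-1\choose 2i}$ options;
\item a multiset of $k-2i$ tokens on the $c_2$ vertices of $C_2$, giving ${c_2+k-2i-1\choose k-2i}$ options.
\end{itemize}
Summing over $i=0,\ldots,\lfloor k/2\rfloor$ gives exactly the right-hand side of \eqref{alpha(FFk(bip0)}. Each part of a bipartition is an independent set, so $\alpha({\cal F}_k(G))\ge|{\cal E}|$, which is the claimed inequality.

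The same count also fits Theorem~\ref{th:indFFk(G)}. Indeed, ${\cal E}$ is the union over $\pi_1=2i$ of the independent families built there from the partition with the single class $\{{\cal C}_1,{\cal C}_2\}$, together with the configurations having all tokens in one class. We do not need this, however: the direct parity argument already shows that configurations with different even values of $t$ are non-adjacent, since $t$ changes by exactly one along any edge.

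There is no real obstacle here; the only point needing care is the counting of ${\cal E}$. The extreme cases must be included correctly. For $i=0$ the factor ${c_1-1\choose 0}$ equals $1$. When $k$ is even, the term $i=k/2$ has ${c_2-1\choose 0}=1$. Both agree with the convention that there is exactly one way to place zero tokens. The hypothesis $c_1\le c_2$ is not used for validity. It only indicates that the even part, which contains the term with all $k$ tokens in the larger class $C_2$, is the natural choice of part; one could equally take $\max(|{\cal E}|,|{\cal O}|)$.
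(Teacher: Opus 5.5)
Your proof is correct and is essentially the paper's argument: the parity of the number of tokens in $C_1$ gives the bipartition (the paper's ${\cal S}_1,{\cal S}_2$), and the right-hand side of the bound is exactly the size of the even class, which you count correctly, including the boundary terms. Your aside relating ${\cal E}$ to Theorem~\ref{th:indFFk(G)} is a bit loose, since that theorem fixes a single token distribution per partition class and the non-adjacency across different even values of $t$ really comes from your parity argument; as you note, though, it is not needed.
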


\begin{proof}
The proof follows the same lines of reasoning as in Fabila-Monroy, Flores-Peñaloza, Huemer, Hurtado, Urrutia, and Wood \cite{ffhhuw12} (for bipartiteness), and de Alba, Carballosa, Leaños, and Rivera \cite{aclr20} (for the independence number), but now using multisets instead of sets.
For instance, to prove that ${\cal F}_k(G)$ is bipartite, let $A,B$ be multisets with elements in $C_1\cup C_2$ (representing vertices of ${\cal F}_k(G)$). Then, the stable sets of the $k$-supertoken graph ${\cal F}_k(G)$ are 
\begin{align*}
{\cal S}_1 &=\{A : |A|=k,  \mbox{ $A$ has an odd number of elements of $C_1$}\},\\
{\cal S}_2 &=\{B : |B|=k,  \mbox{ $B$ has an even number of elements of $C_1$}\}.
\end{align*}
Indeed, since $G$ is bipartite, the vertices adjacent to a vertex $A\in {\cal S}_1$ are obtained by `moving' a token from $C_1$ to $C_2$ {or from $C_2$ to $C_1$. In either case,} this changes the parity of the number of tokens in $C_1$ and, hence, such vertices must belong to ${\cal S}_2$. Analogously, all the vertices adjacent to vertex $B\in {\cal S}_2$ must be in
${\cal S}_1$. 
Finally, notice that the right side of \eqref{alpha(FFk(bip0)}
is just the cardinality of 
${\cal S}_2$.
\end{proof}

Notice that, when $k$ is even, the formula in  \eqref{alpha(FFk(bip0)} is symmetric on the variables $c_1$ and $c_2$. {Note also that, since $G$ is always a subgraph of ${\cal F}_k(G)$, $G$ is bipartite if and only if ${\cal F}_k(G)$ is.}

\begin{lemma}
\label{lem:Hall}
Let $G$ be a bipartite graph with independent sets $C_1$, $C_2$, whose cardinalities satisfy $c_1\le c_2$, as in Theorem \ref{th:bip}. If all the vertices of $C_1$ have a degree at least $\delta$ whereas all the vertices of $C_1$ have a degree at most $\delta$, then the independence number of $G$ is $\alpha(G)=c_2$.
\end{lemma}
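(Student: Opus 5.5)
I read the hypothesis as intended to say that every vertex of $C_1$ has degree at least $\delta$ and every vertex of $C_2$ has degree at most $\delta$. The second ``$C_1$'' in the statement appears to be a typo for $C_2$. I also take $\delta\ge 1$. If $\delta=0$, the vertices of $C_2$ are isolated, so $G$ has no edges and $\alpha(G)=c_1+c_2$; that degenerate case has to be excluded. The plan is the classical route suggested by the label of the lemma: find a matching saturating $C_1$ via Hall's theorem, then convert it into the value of $\alpha(G)$ via König's theorem.

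\emph{Step 1 (Hall's condition).} Let $S\subseteq C_1$ and let $N(S)\subseteq C_2$ be its neighbourhood. Count the edges between $S$ and $N(S)$ in two ways.
\begin{itemize}
\item Every vertex of $S$ has degree at least $\delta$, and all its neighbours lie in $N(S)$, so there are at least $\delta|S|$ such edges.
\item Every vertex of $N(S)$ has degree at most $\delta$, so there are at most $\delta|N(S)|$ such edges.
\end{itemize}
Hence $\delta|S|\le\delta|N(S)|$, and dividing by $\delta\ge1$ gives $|N(S)|\ge|S|$. By Hall's theorem, $G$ has a matching $M$ of size $c_1$ saturating $C_1$. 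Taking $S=C_1$ recovers $c_1\le c_2$, so that assumption is consistent with (indeed implied by) the degree hypothesis.

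\emph{Step 2 (upper bound on $\alpha$).} An independent set contains at most one endpoint of each edge of $M$. It may also contain the $c_2-c_1$ vertices of $C_2$ not covered by $M$. So every independent set has at most $c_1+(c_2-c_1)=c_2$ vertices. Equivalently, by König's theorem, $\alpha(G)=n-\nu(G)\le (c_1+c_2)-c_1$.

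\emph{Step 3 (lower bound).} $C_2$ is itself independent, so $\alpha(G)\ge c_2$. Combining with Step 2 gives $\alpha(G)=c_2$.

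The only real content is the double-counting verification of Hall's condition in Step 1. The main things to be careful about are the degree reading of the hypothesis and the exclusion of $\delta=0$; everything else is standard.
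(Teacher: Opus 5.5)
Your proof is correct and follows the paper's route: you verify Hall's condition by the same double count $|N(S)|\ge m/\delta\ge|S|$, and then the matching saturating $C_1$ forces $\alpha(G)=c_2$, a step the paper cites from de Alba et al.\ and you verify directly. Your reading of the second $C_1$ as $C_2$ is right, and so is your exclusion of $\delta=0$, which the paper's division by $\delta$ tacitly assumes.
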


\begin{proof}
 As shown in de Alba, Carballosa, Leaños, and Rivera \cite[Lem. 3.2]{aclr20}, the result holds when there exists a matching $M$ of $C_1$ into $C_2$. (That is, an independent set of edges containing all the vertices of $C_1$). In turn, by applying the classical Hall's Theorem \cite{h35} and using the hypothesis, such a matching $M$ exists because, if $m$ is the number of edges between $S\subset C_1$ and its neighborhood $N(S)\subset C_2$, we have 
\begin{align*}
&|N(S)| \ge \frac{m}{\delta} \ge |S|.
\qedhere \end{align*}
\end{proof}

In the commented case of $\alpha(G)=c_2=|C_2|$, computer evidence leads us to pose the following conjecture.

\begin{conjecture}
\label{conj:alpha-bip}
In the settings of Theorem \ref{th:bip}, if $\alpha(G)=|C_2|$, then the independence number of ${\cal F}_k(G)$ attains the upper bound in \eqref{alpha(FFk(bip0)}.
\end{conjecture}

For even cycles $C_{2c}$, we have $c_1=c_2=c$ and \eqref{alpha(FFk(bip0)} gives the values in Table \ref{tab:ind-num} depending on $k\ge 0$, {together with the type of sequence in The Online Encyclopedia of Integer Sequences \cite{Sl}}
(for the trivial value $k=0$, the formula gives 1).

\begin{table}
\begin{center}
\begin{tabular}{|c|cccccccccc|c|}
\hline
$c\backslash k$ & 0 & 1 & 2 & 3 & 4 & 5 & 6 & 7 & 8 & 9 & OEIS\\
\hline
\hline
1 & 1 & 1 & 2 & 2 & 3 & 3 & 4 & 4 & 5 & 5 & A008619\\
 \hline
2 & 1 & 2 & 6 & 10 & 19 & 28 & 44 & 60 & 85 & 110 & A005993\\
 \hline
3 & 1 & 3 & 12 & 28 & 66 & 126 & 236 & 396 & 651 & 1001 & A005995\\
 \hline
4 & 1 & 4 & 20 & 60 & 170 & 396 & 868 & 1716 & 3235 & 5720 & A018211\\
\hline
5 & 1 & 5 & 30 & 110 & 365 & 1001 & 2520 & 5720 &121905 & 24310 & A018213\\
\hline
6 & 1 & 6 & 42 & 182 & 693 & 2184 & 6216 & 15912 & 37854 & 83980 & A062136\\
\hline
7 & 1 & 7 & 56 & 280 & 1204 & 4284 & 13608 & 38760 & 101850 & 248710 & ---\\
\hline
\end{tabular}
\end{center}
\caption{Bounds for the independence number of the $k$-supertoken of the cycle on $2c$ vertices.}
\label{tab:ind-num}
\end{table}

As a consequence of Theorem \ref{th:indFFk(G)}, 
we have the following result.
\begin{corollary}
Let $G$ be a graph as in Theorem \ref{th:indFFk(G)}. Then, the Shannon capacity of its $k$-supertoken graph satisfies
\begin{equation}
\label{ShannonFF}
   \Theta({\cal F}_k(G))\ge \sum_{j=1}^{\sigma}[\#{\cal P}_j],
\end{equation} 
where the numbers $[\#{\cal P}_j]$ are defined in \eqref{|P|}.
\end{corollary}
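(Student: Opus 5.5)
The plan is to use the standard fact that the Shannon capacity of any graph $H$ is at least its independence number. Combined with Theorem \ref{th:indFFk(G)} applied to $H={\cal F}_k(G)$, this gives the bound \eqref{ShannonFF}. So the proof reduces to recalling the definition of $\Theta$ and justifying the inequality $\Theta(H)\ge\alpha(H)$.

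First, I would recall that
$$
\Theta(H)=\sup_{m\ge 1}\sqrt[m]{\alpha(H^{\boxtimes m})}=\lim_{m\to\infty}\sqrt[m]{\alpha(H^{\boxtimes m})},
$$
where $H^{\boxtimes m}$ is the strong product defined in the Introduction. The inequality then follows from a concrete construction.

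\textbf{Step 1.} Let $I$ be an independent set of $H$. I would check that the Cartesian power $I^m=I\times\cdots\times I$ is independent in $H^{\boxtimes m}$. Take two distinct $m$-tuples $(x_1,\ldots,x_m)\ne(y_1,\ldots,y_m)$ in $I^m$. They differ in some coordinate $i$. There $x_i\ne y_i$ are two distinct vertices of $I$, so they are not adjacent in $H$. By the definition of the strong product, the two tuples are therefore not adjacent. This gives $\alpha(H^{\boxtimes m})\ge\alpha(H)^m$, and hence $\sqrt[m]{\alpha(H^{\boxtimes m})}\ge\alpha(H)$ for every $m$. In particular, taking $m=1$ already gives $\Theta(H)\ge\alpha(H)$.

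\textbf{Step 2.} Set $H={\cal F}_k(G)$ and chain this inequality with \eqref{alphaFF} from Theorem \ref{th:indFFk(G)}. This yields $\Theta({\cal F}_k(G))\ge\alpha({\cal F}_k(G))\ge\sum_{j=1}^{\sigma}[\#{\cal P}_j]$, which is exactly \eqref{ShannonFF}.

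I see no real obstacle; the argument is a direct chaining of two inequalities. The only point needing care is the convention for $\Theta$. Some authors use $\log\Theta$ (the information-rate version), and then the bound would have to be stated with logarithms. Here I take the multiplicative definition above, under which the statement holds as written.
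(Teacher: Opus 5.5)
Your proposal is correct and is the argument the paper intends: the paper states the corollary as an immediate consequence of Theorem \ref{th:indFFk(G)} via $\Theta({\cal F}_k(G))\ge\alpha({\cal F}_k(G))$, which your $m=1$ case (or the product construction $I^m$) supplies. One caveat, which comes from Theorem \ref{th:indFFk(G)} and not from your reasoning: the chain is only valid for $k\ge 2$, because for $k=1$ each part is a single colour class, the right-hand side equals $n$, and the statement would claim $\Theta(G)\ge n$, which is false as soon as $G$ has an edge.
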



{In the following result, we show that exact values of the independence number can be given in the case of 2-supertoken graphs of cycles.}

\begin{theorem}
\label{th:alpha(p=1)}
The independence number of the $2$-supertoken graph of the cycle $C_n$, for $n\ge 2$ (where $C_2\cong K_2$) satisfies the following equalities:
\begin{align*}
 \alpha({\cal F}_2(C_n)) &=
 \left\{ 
 \begin{array}{ll}
r(n+2) & \mbox{ if } n=4r \mbox{ or } n=4r+1,\\ 
(r+1)n & \mbox{ if } n=4r+2 \mbox{ or } n=4r+3.
 \end{array}
 \right. 
\end{align*}
\end{theorem}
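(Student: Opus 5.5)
The plan is to organize the vertices of ${\cal F}_2(C_n)$ into ``difference layers'', show that two consecutive layers carry at most $n$ independent vertices, and then match this with an explicit independent set made of alternate layers. Write $V(C_n)=\Z_n$. Each vertex of ${\cal F}_2(C_n)$ is a multiset $\{i,i+d\}$ with $d\in\{0,1,\ldots,m\}$, where $m=\lfloor n/2\rfloor$. Let $L_d$ be the set of vertices with difference $d$. Then $|L_d|=n$ for $d<n/2$, while $|L_{n/2}|=n/2$ when $n$ is even.

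\emph{Step 1: the edges of ${\cal F}_2(C_n)$.} Moving one token of $\{i,i+d\}$ by $\pm1$ changes the difference by $\pm1$. Hence, for $0\le d<m$, the only edges between $L_d$ and $L_{d+1}$ join $\{i,i+d\}$ to $\{i,i+d+1\}$ and to $\{i-1,i+d\}$. For $d<m$ there are no edges inside $L_d$.

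For the top layer $L_m$:
\begin{itemize}
\item If $n$ is even, moving either token of $\{i,i+n/2\}$ produces difference $n/2-1$, so $L_m$ is independent.
\item If $n$ is odd, $\{i,i+m+1\}=\{i+m+1,i+n\}$ has difference $m$. So $L_m$ induces a cycle $C_n$, since $i\mapsto i+m+1$ generates $\Z_n$. Its independence number is $m$.
\end{itemize}
Consequently, for $d+1<n/2$ the subgraph induced by $L_d\cup L_{d+1}$ is the cycle $C_{2n}$, and its independence number is $n$. This includes $d=0$, where $\{i,i\}$ is adjacent to $\{i,i+1\}$ and $\{i-1,i\}$.

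\emph{Step 2: upper bound.} Partition the layers into consecutive pairs $(L_0,L_1),(L_2,L_3),\ldots$, possibly leaving the top layer alone.
\begin{itemize}
\item $n=4r$: there are $r$ pairs and a leftover $L_{2r}$ of size $2r$, giving $\alpha\le rn+2r=r(n+2)$.
\item $n=4r+1$: there are $r$ pairs and a leftover odd cycle $L_{2r}\cong C_n$ with independence number $2r$, giving again $r(n+2)$.
\item $n=4r+2$ or $n=4r+3$: there are exactly $r+1$ pairs, giving $\alpha\le (r+1)n$.
\end{itemize}
The one pair not covered by Step 1 is $L_{m-1}\cup L_m$ for $n=4r+2$, where $|L_m|=n/2$. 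This induced subgraph is bipartite: each vertex of $L_{m-1}$ has degree $2$ toward $L_m$, and each vertex of $L_m$ has degree $4$. By the Hall argument of Lemma~\ref{lem:Hall}, there is a matching of $L_m$ into $L_{m-1}$. Hence its independence number is $|L_{m-1}|=n$, as needed.

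\emph{Step 3: lower bound.} Take the even-indexed layers $L_0,L_2,L_4,\ldots$. They are mutually non-adjacent by Step 1, and each is independent unless it is the top layer for odd $n$.
\begin{itemize}
\item $n=4r$: take $L_0,\ldots,L_{2r}$, giving $rn+2r$.
\item $n=4r+1$: take $L_0,\ldots,L_{2r-2}$ together with a maximum independent set of the cycle $L_{2r}$, giving $rn+2r$.
\item $n=4r+2$ or $n=4r+3$: take $L_0,\ldots,L_{2r}$, which are $r+1$ full layers of size $n$, giving $(r+1)n$.
\end{itemize}
These values match Step 2, which proves the stated formula.

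The main obstacle is the boundary bookkeeping in Step 1. Small $n$ must be checked directly, because some of the listed neighbours coincide and the structure degenerates. For $n=2$, where $C_2\cong K_2$, the graph ${\cal F}_2(C_2)$ is the path $\{0,0\}$--$\{0,1\}$--$\{1,1\}$, so $\alpha=2$, which agrees with $(r+1)n$ for $r=0$. For $n=3$, $L_1$ is the top layer, which is a triangle, and the count $\alpha=3$ agrees with $(r+1)n$ for $r=0$. The other point needing care is the top pair for even $n$, which is not a $C_{2n}$ and is exactly why Lemma~\ref{lem:Hall} is needed there.
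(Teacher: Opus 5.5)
Your proof is correct, and your lower-bound sets (the even layers $L_0,L_2,\ldots$, plus a maximum independent set of the odd cycle $L_{2r}$ when $n=4r+1$) are the same as the sets (a) and (b) in the paper, up to which maximum independent set of $L_{2r}$ you pick. Where you differ is the upper bound.

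\textbf{The paper's route.} It splits by parity of $n$.
\begin{itemize}
\item For even $n$, it applies Hall's theorem (Lemma~\ref{lem:Hall}) once, to the whole bipartition $\mathcal{S}_1,\mathcal{S}_2$ of $\mathcal{F}_2(C_n)$.
\item For odd $n$, it treats $\mathcal{F}_2(C_n)$ as a lift over $\mathbb{Z}_n$ of a voltage graph. It computes all eigenvalues with Losonczi's tridiagonal formulas, counts their signs, and then applies Cvetkovi\'c's inertia bound.
\end{itemize}

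\textbf{Your route.} You partition the vertex set into pairs of consecutive difference layers plus possibly a leftover top layer, and use $\alpha(G)\le\sum_j\alpha(G[V_j])$. Each pair is spanned by a Hamiltonian cycle $C_{2n}$, except the top pair when $n\equiv 2\pmod 4$, where Hall gives a matching saturating the half-size layer. This treats both parities the same way and is entirely elementary. It also avoids the eigenvalue sign-counting step, which the paper only sketches. In exchange, the paper's spectral route gives the whole spectrum of $\mathcal{F}_2(C_n)$ for odd $n$ as a by-product.

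\textbf{One small inaccuracy.} For odd $n$ and $d+1=m$, $L_d\cup L_{d+1}$ does not induce just a $C_{2n}$, because $L_m$ carries its own $n$-cycle. This case occurs in your last pair when $n=4r+3$. It does not affect the argument: the induced subgraph still contains a spanning $C_{2n}$, so its independence number is at most $n$, which is all you use.
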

\begin{proof}
    First, let us describe some independent sets for both cases
    (all arithmetic is modulo $n$).
    \begin{itemize}
        \item[$(a)$] 
        If $n=4r$ or $n=4r+1$, an independent set of $rn+2r$ vertices is
        \begin{align*}
        ii, i(i+2),\ldots, i(i+2[r-1])\quad & \mbox{for $i=0,1,\ldots,n-1$,}\\
        \mbox{and\quad } i(i+2r)\quad & \mbox{for $i=0,1,\ldots,2r-1$.}
       \end{align*}
       See the examples of ${\cal F}_2(C_8)$ and ${\cal F}_2(C_9)$ in Figure \ref{fig:FF2-C8+C9}.
       \item[$(b)$] 
        If  $n=4r+2$ or $n=4r+3$, an independent set of $(r+1)n$ vertices is
        \begin{align*}
        ii, i(i+2),\ldots, i(i+2r)\quad & \mbox{for $i=0,1,\ldots,n-1$.}
       \end{align*}
       See the examples of ${\cal F}_2(C_6)$ and ${\cal F}_2(C_7)$ in Figure \ref{fig:FF2-C6+C7}.
    \end{itemize}
    To prove that these are maximal independent sets, we distinguish the cases of $n$ even and $n$ odd.
    
    \begin{itemize}
    \item[$(i)$] 
    If $n$ is even, $n=4r$ or $n=4r+2$, the obtained supertoken graph is bipartite, and Theorem \ref{th:bip} applies. Then,  \eqref{alpha(FFk(bip0)} for $k=2$ and $c_1=c_2=c$ gives $\alpha({\cal F}_2(C_{2c}))\ge r(n+2)$ if $c$ is even $(n=4r)$, and 
    $\alpha({\cal F}_2(C_{2c}))\ge (r+1)n$ if $c$ is odd $(n=4r+2)$.
    We follow the notation in Theorem \ref{th:bip} to show that such bounds attain equality. So, with $C_1=\{0,2,\ldots\}$ and $C_2=\{1,3,\ldots\}$ denoting the independent sets of $C_n$, the independent set  in $(a)$ (for $n=4r$) or in $(b)$
    (for $n=4r+2$) is 
    $$
    {\cal S}_2=\{A: |A|=2, \ A \mbox{\ has an even number (0 or 2) of elements of } C_1\}.
    $$
    Then, if $n=4r$, we have $|{\cal S}_2|=rn+2r=4r^2+2r$ and
     $|{\cal S}_1|=rn=4r^2$, whereas, if $r=4r+2$, then $|{\cal S}_2|=(r+1)n=4r^2+6r+2$ and $|{\cal S}_1|=rn+2r+1=4r^2+4r+1$.
     Thus, in both cases, $|{\cal S}_1|<|{\cal S}_2|$, ${\cal S}_1$ has vertices with degree 4, whereas ${\cal S}_2$ has vertices of degree 4 and 2. Hence, Lemma \ref{lem:Hall} applies and $\alpha({\cal F}_2(C_{2c}))=|{\cal S}_2|$.
\item[$(ii)$] 
If $n$ is odd, $n=4r+1$ or $n=4r+3$, the supertoken graph ${\cal F}_2(C_n)$ can be seen as a lift on the group $\Z_n$ of the voltage graph on $\kappa=(n+1)/2$ vertices shown in Figure \ref{fig:base-graph}. The polynomial (tridiagonal) matrix $\BB(z)$, where $z=e^{r\frac{i  2\pi}{n}}$, for $r=0,\ldots, n-1$, and its similar matrix $\BB^*(r)$
    are 
    $$
\BB(z) =
\left(
	\begin{array}{cccccc}
		0 & 1+z^{-1} & 0 & 0 & \ldots & 0 \\
		1+z & 0 & 1+z^{-1} & 0 & \ldots & 0 \\
		0 & 1+z & 0 & 1+z^{-1} & \ddots & 0 \\
		0 & 0 & 1+z & \ddots & \ddots  & 0 \\
		\vdots  & \vdots & \ddots &\ddots & 0 & 1+z^{-1}\\
		0 & 0 & \ldots &0 & 1+z & z^{\kappa-1}+z^{-\kappa+1}
\end{array}
\right)\cong
$$
	\begin{equation}
		\BB^*(r)=
		{\scriptstyle{\left(
		\begin{array}{cccccc}
			0 & 2\cos(\frac{r\pi}{n}) & 0 & 0 & \ldots & 0\\
			2\cos(\frac{r\pi}{n}) & 0 & 2\cos(\frac{r\pi}{n}) & 0 & \ldots & 0\\
			0 & 2\cos(\frac{r\pi}{n}) & 0 & 2\cos(\frac{r\pi}{n}) & \ddots& 0\\
			0 & 0 & 2\cos(\frac{r\pi}{n}) & \ddots &  \ddots & 0\\
			\vdots & \vdots & \ddots & \ddots
			& 0 & 2\cos(\frac{r\pi}{n}) \\[.1cm]
			0 & 0 & \ldots & 0 & 2\cos(\frac{r\pi}{n}) & 2(-1)^r\cos(\frac{r\pi}{n})
		\end{array}
		\right).}}
		\label{B-star}
	\end{equation}
For details about lift graphs and their spectra, see Dalf\'o, Fiol, Miller, Ryan, and \v{S}ir\'a\v{n} \cite{dfmrs17}, or Reyes, Dalf\'o, Fiol, and Messegu\'e \cite{rdfm23}. Now, by using the results of Losonczi \cite{l92} (adjusted to our case) the eigenvalues of $\BB^*(r)$, and, hence, of the adjacency matrix of ${\cal F}_2(C_n)$, are
\begin{equation}
\label{eq:eigenvals}
\lambda(r,k)=4(-1)^{r+1}\cos\left(\frac{r\pi}{n}\right)\cos\left(\frac{2k\pi}{n+2}\right), 
\end{equation}
for $r=0,\ldots,n-1$, and $k=1,\ldots,\lceil n/2\rceil$.
    Then, we again distinguish two cases:
    \begin{enumerate}
        \item[$(1)$] 
        If $n=4r+1$, the supertoken graph ${\cal F}_2(C_n)$ has $N=8r^2+6r+1$, and spectrum with $(N+1)/2$ positive eigenvalues and $(N-1)/2$ negative eigenvalues. Thus, Cvetkovi\'c bound \cite{c71} gives $\alpha\le (N-1)/2=4r^2+3r=r(n+2)$, which corresponds to the size of the given independent set.
        \item[$(2)$] 
        If $n=4r+3$, the supertoken graph ${\cal F}_2(C_n)$ has $N=8r^2+14r+6$ vertices, and spectrum with $N/2$ positive eigenvalues and $N/2$ negative eigenvalues. Thus, Cvetkovi\'c bound gives $\alpha\le N/2=4r^2+7r+3=(r+1)n$, corresponding to our independent set's size.
    \end{enumerate}
    \end{itemize}
    
    To prove $(2)$, the case $(1)$ being similar, we note that, given $r$, the function $\phi(k)=\lambda(r,k)$ is monotone in the interval $[1,2r+2]$ (increasing when $r=0,\pm 2,\pm 4,\ldots$ $\mod n$ and decreasing when $r=\pm 1,\pm3,\ldots$ $\mod n$), with a zero at $k=r+1+\frac{1}{4}$. See Figure
    \ref{fig:lambda(r,k)} for the case $n=11$ $(r=2)$. Then, for each given $r$, the $r+1$ values of $\lambda(r,k)$ for $k=1,2,\ldots,r+1$ and for $k=r+2,r+3,\ldots,2r+2$ have opposite signs. Then, for each (positive or negative) sign, we have a total of $(r+1)n=4r^2+2r+3=N/2$ eigenvalues, as claimed.
\end{proof}

\begin{figure}[t]
	\begin{center}
		\includegraphics[width=10cm]{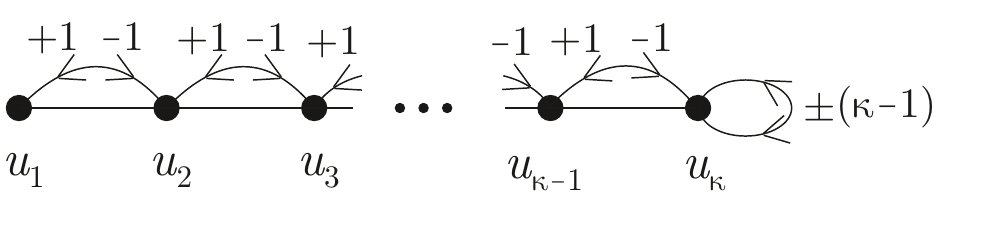}
	\end{center}
	\vskip-1cm
	\caption{The base graph of the graphs ${\cal F}_2(C_n)$ for $n$ odd.}
	\label{fig:base-graph}
\end{figure}

\begin{figure}[t]
	\begin{center}
		\includegraphics[width=10cm]{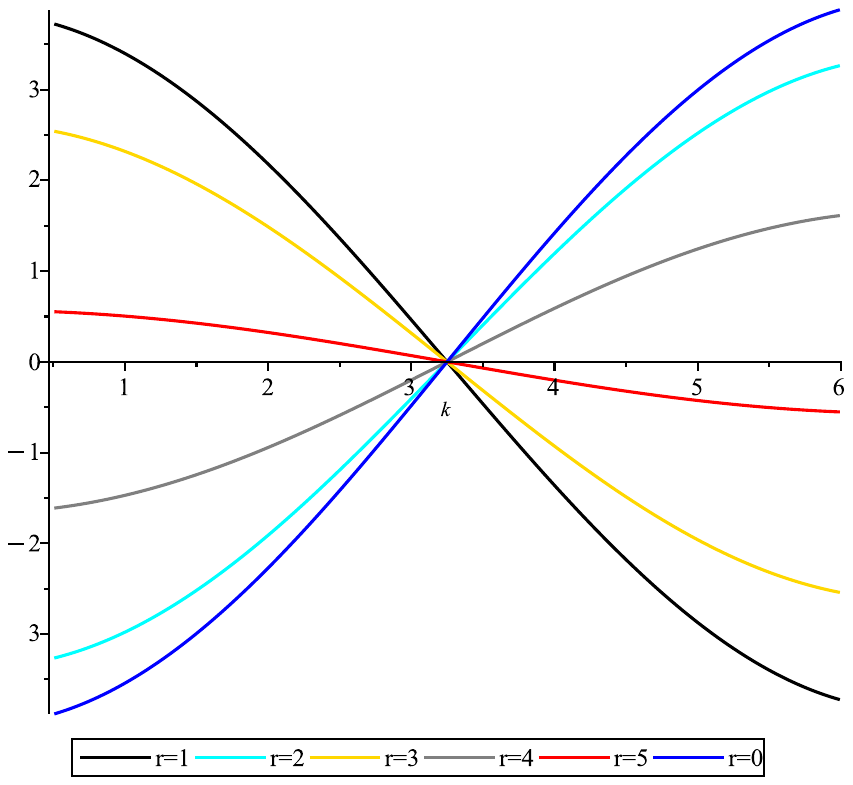}
	\end{center}
	\vskip-6cm
	\caption{The function $\lambda(r,k)$  giving the eigenvalues of ${\cal F}_2(C_{11})$.}
	\label{fig:lambda(r,k)}
\end{figure}

\begin{figure}[t]
	\begin{center}
	\includegraphics[width=11cm]{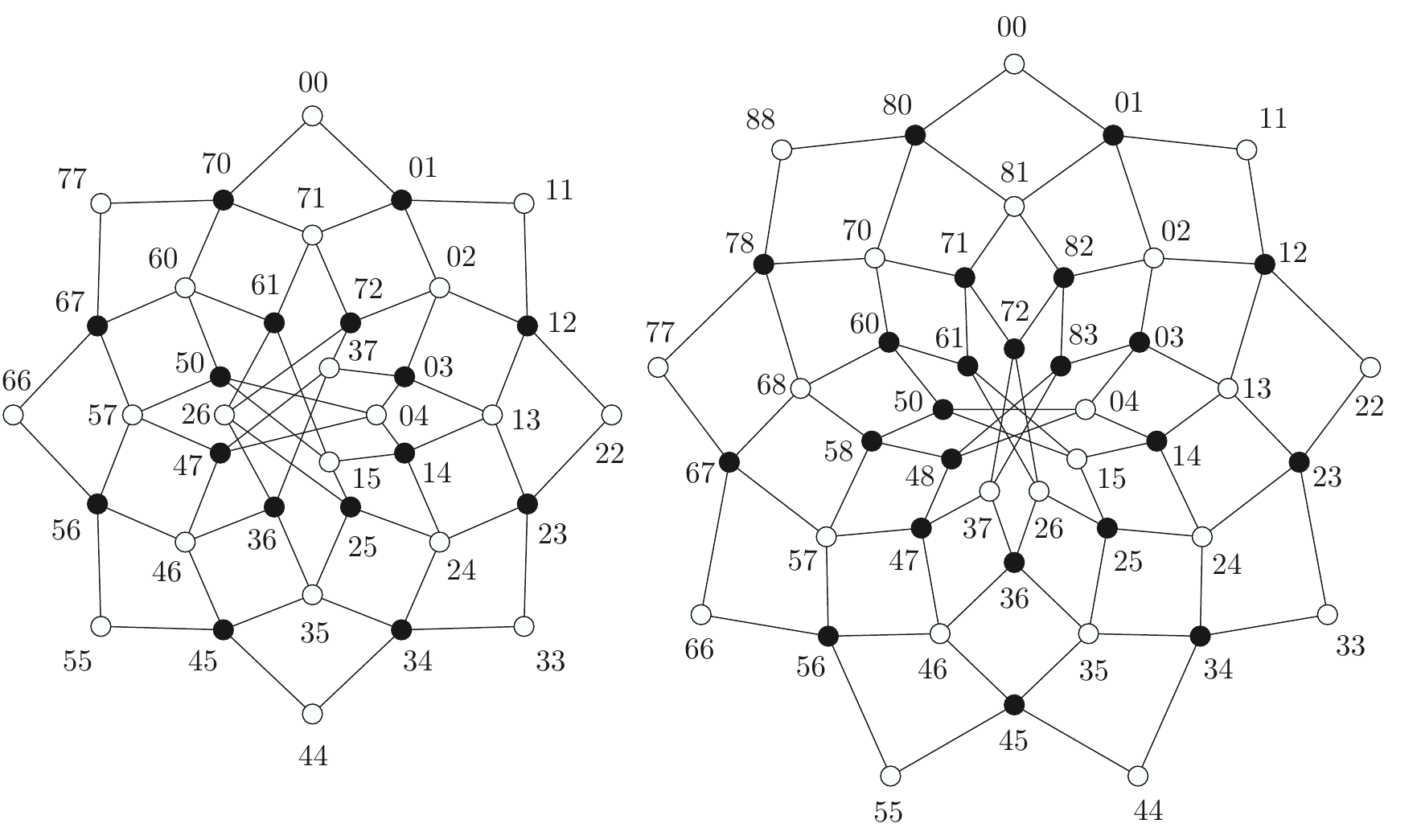}
	\end{center}
	\vskip-.5cm
	\caption{The graphs ${\cal F}_2(C_8)$ and ${\cal F}_2(C_9)$. The white vertices form independent sets.}
	\label{fig:FF2-C8+C9}
\end{figure}

\begin{figure}[t]
	\begin{center}
	\includegraphics[width=11cm]{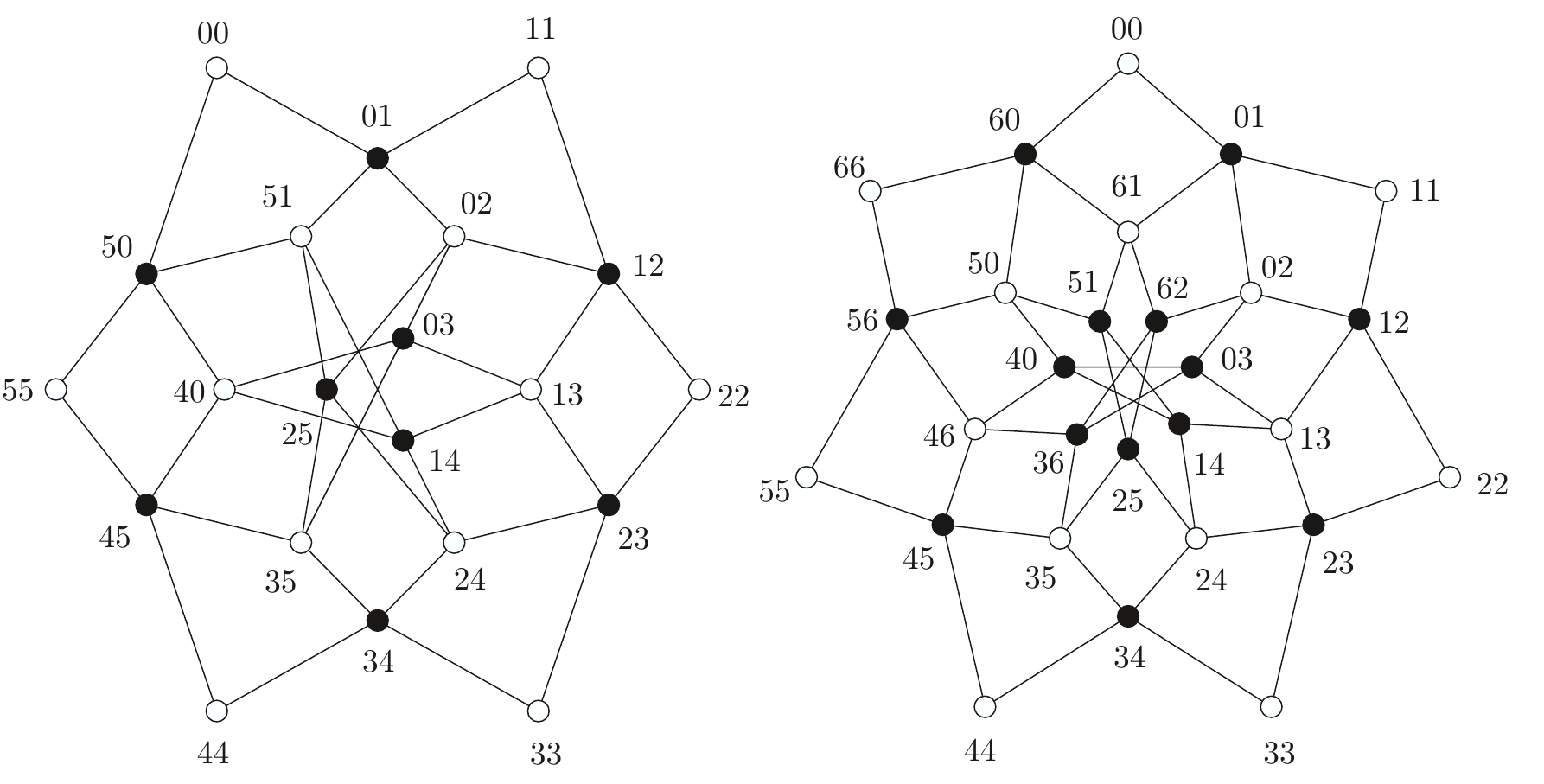}
	\end{center}
	\vskip-.5cm
	\caption{The graphs ${\cal F}_2(C_6)$ and ${\cal F}_2(C_7)$. The white vertices form independent sets.}
	\label{fig:FF2-C6+C7}
\end{figure}

\begin{example}
When $n=7$, the $4\times 4$ polynomial matrix corresponding to the base graph of Figure \ref{fig:base-graph} with $\kappa=4$ is
	\begin{equation}
		\BB(z)=\left(
		\begin{array}{cccc}
			0 & 1+\frac{1}{z} & 0 & 0 \\[.1cm]
			1+z & 0 & 1+\frac{1}{z} & 0 \\[.1cm]
			0 & 1+z & 0 & 1+\frac{1}{z}\\[.1cm]
			0 & 0 & 1+z & z^3+\frac{1}{z^3}
		\end{array}
		\right),
	\end{equation}
	giving, for $z=e^{i\frac{r2\pi}{7}}$ and $r=0,1,\ldots,6$,
	the eigenvalues of ${\cal F}_2(C_7)$ shown in Table \ref{table7}.
  \end{example}
  
  \begin{table}[t]
		\begin{center}
			\begin{tabular}{|c|cccc|}
				\hline
				$\omega=e^{i\frac{2\pi}{7}}$, $z=\omega^r$ & $\lambda_{r,1}$  & $\lambda_{r,2}$ & $\lambda_{r,3}$ & $\lambda_{r,4}$\\
				\hline\hline
				$\spec(\BB(\omega^0))$ & 3.759 & 2 &  $-3.064$ & $-0.6948$ \\
				\hline
				$\spec(\BB(\omega^1))=\spec(\BB(\omega^6))$ & 2.761 & 0.6258  & $-1.802$ & $-3.387$ \\
				\hline
				$\spec(\BB(\omega^2))=\spec(\BB(\omega^5))$ &  2.344 & 1.247  & $-0.4331$ & $-1.910$\\
				\hline
				$\spec(\BB(\omega^3))=\spec(\BB(\omega^4))$ &  0.6818 & 0.1546  & $-0.4450$ & $-0.8364$\\
				\hline
			\end{tabular}
         \caption{All the eigenvalues of the $2$-supertoken ${\cal F}_2(C_7)$.}  \label{table7}
		\end{center}
    
\end{table}

  \begin{example}
  For the case of $n=9$, the eigenvalues of ${\cal F}_2(C_9)$ given by $\eqref{eq:eigenvals}$ are shown in  Table \ref{tab:FF2(C9)}.
  \end{example}
  
  \begin{table}[t]
  \begin{center}
			\begin{tabular}{|c|rrrrr|}
				\hline
				$r \backslash k$ & $1$  & $2$ & $3$ & $4$ & $5$\\
				\hline\hline
				0 & $-3.650$ & $-1.662$ & $0.5693$ &  $2.619$ & $3.838$  \\
				\hline
				$1,8$ & $3.162$ & $1.561$  & $-0.5349$ & $-2.461$ & $-3.606$ \\
				\hline
				$2,7$ &  $-2.577$ & $-1.273$  & $0.4361$ & $2.007$ & $2.940$\\
				\hline
				$3,6$ &  $1.682$ & $0.8308$  & $-0.2846$ & $-1.310$ & $-1.919$\\
    \hline
			$4,5$	& $-0.5843$ &  $-0.2885$ & $0.0988$  & $0.4548$ & $0.6664$ \\
				\hline
			\end{tabular}
            \caption{All the eigenvalues of the $2$-supertoken ${\cal F}_2(C_9)$ given by \eqref{eq:eigenvals}.}
            \label{tab:FF2(C9)}
		\end{center}
\end{table}

 \section{{Clique number of supertoken graphs}}
\label{sec:clique}
 
 The concept of cliques is a fundamental topic in graph theory. In a graph $G$, a clique is a subset of vertices where every pair is mutually adjacent. The clique number $\omega(G)$ represents the size of the largest clique in $G$. \\
 In \cite{ffhhuw12}, Fabila-Monroy, Flores-Pe\~{n}aloza, Huemer, Hurtado, Urrutia, and Wood proved the following result for token graphs.
 \begin{theorem} \cite{ffhhuw12}  
 $\omega (F_k(G))= \min\{\omega(G), \max\{n -k + 1, k + 1\}\}$.   
 \end{theorem}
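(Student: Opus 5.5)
The plan is to classify the cliques of $F_k(G)$ by the structure of the underlying $k$-subsets, and then count separately within each class. First, recall that two vertices $X,Y$ of $F_k(G)$ (which are $k$-subsets of $V(G)$) are adjacent exactly when $X\triangle Y=\{x,y\}$ with $xy\in E(G)$. In particular, $|X\cap Y|=k-1$.

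So a clique of $F_k(G)$ is a family $\mathcal{K}$ of $k$-sets with pairwise intersections of size exactly $k-1$. The first step is the classical combinatorial lemma: such a family either has a common core $A=\bigcap\mathcal{K}$ with $|A|=k-1$, so every member is $A\cup\{x\}$, or is contained in a common $(k+1)$-set $B=\bigcup\mathcal{K}$, so every member is $B\setminus\{x\}$. To prove it, take three members $X,Y,Z$ with $X=A\cup\{x\}$ and $Y=A\cup\{y\}$, where $A=X\cap Y$.
\begin{itemize}
\item If $A\subseteq Z$, then $Z=A\cup\{z\}$.
\item Otherwise $|Z\cap A|=k-2$, and $|Z\cap X|=|Z\cap Y|=k-1$ forces $x,y\in Z$, so $Z\subseteq A\cup\{x,y\}$.
\end{itemize}
Next, check that for $|\mathcal{K}|\ge 3$ the two behaviours cannot mix: if some triple is of the second kind, then every member lies in $X\cup Y$. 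This follows by applying the same argument to each further member against the triples it forms. I expect this case analysis (the mixing step) to be the main obstacle, though it is routine. Families of size at most $2$ are trivially of both types.

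Second, I bound each type.
\begin{itemize}
\item Type 1 (common core $A$): $A\cup\{x\}$ and $A\cup\{y\}$ are adjacent iff $xy\in E(G)$. Hence the added elements form a clique of $G-A$, giving size $\le\min\{\omega(G),\,n-k+1\}$.
\item Type 2 (common superset $B$): $B\setminus\{x\}$ and $B\setminus\{y\}$ are adjacent iff $xy\in E(G)$. Hence the removed elements form a clique of $G[B]$, giving size $\le\min\{\omega(G),\,k+1\}$.
\end{itemize}
Since $\max$ of the two minima equals $\min\{\omega(G),\max\{n-k+1,k+1\}\}$, this gives the upper bound.

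Third, the constructions, which assume $1\le k\le n-1$. Let $K$ be a maximum clique of $G$.
\begin{itemize}
\item For type 1, choose $t=\min\{\omega,n-k+1\}$ vertices of $K$. The remaining $n-t\ge k-1$ vertices contain a $(k-1)$-set $A$, and the sets $A\cup\{x\}$, for $x$ among the chosen vertices, form a clique of size $t$.
\item For type 2, choose $t=\min\{\omega,k+1\}$ vertices of $K$ and extend them to a $(k+1)$-set $B$, which is possible since $n\ge k+1$. The sets $B\setminus\{x\}$, for $x$ among the chosen vertices, form a clique of size $t$.
\end{itemize}
Taking the better of the two constructions attains the bound, which completes the argument.
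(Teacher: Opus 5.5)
Your proposal is correct and follows essentially the same route as the Fabila-Monroy et al.\ argument that the paper cites for this theorem and adapts to multisets in Proposition~\ref{propo:cliqueK}: every clique is either $\{S\cup\{v\}: v\in K\}$ with $|S|=k-1$ or $\{(S\cup K)\setminus\{v\}: v\in K\}$ with $|S|+|K|=k+1$, which gives the bounds $\min\{\omega(G),n-k+1\}$ and $\min\{\omega(G),k+1\}$, and both are attained. Your explicit restriction to $1\le k\le n-1$ is genuinely needed (the statement as quoted omits it), since for $k=n$ the graph $F_n(G)$ is a single vertex and the formula fails whenever $G$ has an edge.
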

 {In contrast, for the case of supertoken graphs, the situation is different. See an example in Figure \ref{fig:F_2^2(K4)}}. For supertoken graphs, we establish the following result.
\begin{proposition}\label{prop:types}
   Let $G$ be a graph, and ${\cal F}_k(G)$ be the $k$-supertoken graph of $G$. If $X=\{A,B,C\}$  is a clique in ${\cal F}_k(G)$, then there exists a clique of size 3 in G. Furthermore, 
 $X$ can only be one of the following two types:  
 \begin{itemize}
   \item [Type 1:] $k-1$ elements of $A$, $B$, and $C$ remain fixed, say $\{a_1, \dots, a_{k-1}\}$, while the $k$-th element, denoted as $a$, $b$, and $c$ for $A$, $B$, and $C$ respectively, corresponds to the vertices of a $3$-clique in $G$. That is, $A = \{a_1, a_2, \dots, a_{k-1}, a\}$, $B = \{a_1, a_2, \dots, a_{k-1}, b\}$, and $C = \{a_1, a_2, \dots, a_{k-1}, c\}$, where the clique in $G$ is $\{a, b, c\}$.  
\item [Type 2:] $k-2$ elements of $A$, $B$, and $C$ remain fixed, say $\{a_1, \dots, a_{k-2}\}$, while the remaining two positions in $A$, $B$, and $C$ correspond to the vertices of a $3$-clique in $G$. Specifically, $A = \{a_1, a_2, \dots, a_{k-2}, a_{k-1}, a\}$, $B = \{a_1, a_2, \dots, a_{k-2}, a_{k-1}, b\}$, and $C = \{a_1, a_2, \dots, a_{k-2}, b, a\}$, where the clique in $G$ is $\{a, a_{k-1}, b\}$.  
 \end{itemize}
 \end{proposition}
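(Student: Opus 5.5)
We will work with multisets throughout. Adjacency in ${\cal F}_k(G)$ means $A\ne B$, $|A\cap B|=k-1$ as multisets, and $A\setminus B=\{x\}$, $B\setminus A=\{y\}$ with $xy\in E(G)$. We write $A=M_{AB}+x$ and $B=M_{AB}+y$, where $M_{AB}=A\cap B$ has $k-1$ elements.

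The plan is a case analysis on the pairwise intersection of the three multisets. First, take $D=A\cap B\cap C$ (multiset intersection) and remove it, obtaining $A'=A-D$, $B'=B-D$, $C'=C-D$, all of the same size $s$. Adjacency is preserved under removing the common part, because the symmetric differences are unchanged. Since $|A'\cap B'|=s-1$, $|A'\cap C'|=s-1$ and $A'\cap B'\cap C'=\emptyset$, we get
\[
|A'\cap B'|+|A'\cap C'|\le |A'| ,
\]
that is, $2(s-1)\le s$, so $s\le 2$. Also $s\ge 1$, since the three multisets are distinct.

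\textbf{Case $s=1$.} Here $A'=\{a\}$, $B'=\{b\}$, $C'=\{c\}$ are pairwise distinct and pairwise adjacent in $G$. This gives Type 1, and $\{a,b,c\}$ is a triangle in $G$.

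\textbf{Case $s=2$.} The intersections $A'\cap B'$, $A'\cap C'$, $B'\cap C'$ each have size $1$. The first two are disjoint inside $A'$, so $A'=\{p,q\}$ with $A'\cap B'=\{p\}$ and $A'\cap C'=\{q\}$. Similarly $B'=\{p,t\}$ with $B'\cap C'=\{t\}$, and then $C'=\{q,t\}$.

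We must check that $p,q,t$ are pairwise distinct as elements of $G$, and this is the step needing care with multiplicities. If $p=q$, then $p\in A'\cap B'\cap C'$, contradicting triple-disjointness, because $C'$ contains $q=p$ and $B'$ contains $p$. The cases $p=t$ and $q=t$ are handled the same way.

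Then the three adjacencies are:
\begin{itemize}
\item $A'\sim B'$: one moves $q$ to $t$, so $qt\in E(G)$;
\item $A'\sim C'$: one moves $p$ to $t$, so $pt\in E(G)$;
\item $B'\sim C'$: one moves $p$ to $q$, so $pq\in E(G)$.
\end{itemize}
So $\{p,q,t\}$ is a 3-clique in $G$. Relabelling $a_{k-1}=p$, $a=q$, $b=t$ gives exactly Type 2:
\[
A=D+\{a_{k-1},a\},\qquad B=D+\{a_{k-1},b\},\qquad C=D+\{b,a\}.
\]

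In both cases $G$ contains a triangle, which proves the first claim. The main obstacle is making the multiset arithmetic rigorous: the intersection bound and the distinctness of $p,q,t$ must hold for multisets and not just for sets, since tokens may repeat.
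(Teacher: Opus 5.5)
Your proof is correct, and it reaches the Type 1/Type 2 dichotomy by a different mechanism than the paper.

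The paper writes $A=\{a_1,\dots,a_{k-1},a\}$ and $B=\{a_1,\dots,a_{k-1},b\}$, and splits on which token of $A$ is moved to obtain $C$:
\begin{itemize}
\item Case 1: the token on $a$ moves. This gives $C=\{a_1,\dots,a_{k-1},c\}$, and then $B\sim C$ yields $bc\in E(G)$.
\item Case 2: a token on some vertex $a_{k-1}\neq a$ moves. Here $B\sim C$ forces $c=b$ and $a\,a_{k-1}\in E(G)$.
\end{itemize}

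You instead strip the core $D=A\cap B\cap C$ and use the counting bound $|A'\cap B'|+|A'\cap C'|\le |A'|$ to get $s\le 2$. The bound holds for multisets because, pointwise, $\min(x,y)+\min(x,z)\le x$ whenever $\min(x,y,z)=0$. Type 1 and Type 2 then appear as $s=1$ and $s=2$.

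The paper's route is shorter and simply follows the token moves, as in the token-graph argument it adapts. Yours buys three things:
\begin{itemize}
\item It is symmetric in $A,B,C$.
\item It handles repeated tokens automatically. In the paper, a second token sitting on $a$ must be absorbed into Case 1 by reading \emph{a different vertex} as a vertex of $G$ other than $a$. Without that reading, the claim that $c=b$ is the only possibility in Case 2 would miss $a_{k-1}=a$.
\item It identifies the fixed part canonically as $A\cap B\cap C$, of size $k-1$ or $k-2$. This is exactly the multiset $S$ in the description of cliques in Proposition~\ref{propo:cliqueK} when $|X|=3$.
\end{itemize}
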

 \begin{proof}
Let $ X = \{A, B, C\} $ be a 3-clique in $ \mathcal{F}_k(G) $. By the definition of adjacency in $ \mathcal{F}_k(G) $, the vertices $ A $ and $ B $ differ in one element. This means $ A = \{a_1, a_2, \dots, a_{k-1}, a\} $ and $ B = \{a_1, a_2, \dots, a_{k-1}, b\} $, where $ a, b \in V(G) $ and $ (a, b) \in E(G) $.  
Since $ C $ is adjacent to $ A $, there are two possible cases to consider: 
\begin{itemize}
\item[Case 1:] The token placed at vertex $ a $ moves to vertex $ c $, meaning $ C = \{a_1, a_2, \dots, a_{k-1}, c\} $.  
As $ C $ is also adjacent to $ B $, the token in {$c$} must move from $ c $ to $ b $. Consequently, the vertices $ a, b, c \in V(G) $ must form a 3-clique in $ G $. This corresponds to a Type 1 clique in $ \mathcal{F}_k(G) $, where $ k-1 $ elements remain fixed, and the $ k $-th elements $ a, b, c $ are moving. 
\item[Case 2:] The token placed at a different vertex, say $ a_{k-1} $ without loss of generality, moves to vertex $ c $, which implies $ C = \{a_1, a_2, \dots, c, a\} $.  
Since $ C $ is also adjacent to $ B $, the only way this is possible is if $ c = b $ and $ a $ is adjacent to $ a_{k-1} $ in $ G $. Thus, the vertices $ a, a_{k-1}, b \in V(G) $ must form a 3-clique in $ G $. This configuration corresponds to a Type 2 clique. This completes the proof.
\end{itemize}
\end{proof}
{Notice that the 3-clique of Type 1 is characterized by $B\cap C:=\{a_1,a_2,\ldots,a_{k-1}\}\subset A$, whereas in the 3-clique of Type 2 we have $A\subset B\cup C:=\{a_1,a_2,\ldots,a_{k-1},a,b\}$. These are precisely the two classes of 3-cliques considered in  \cite{ffhhuw12} for the token graph  
 $F_k(G)$}.
\begin{proposition}
\label{propo:cliqueK}
    Let $G$ be a graph and $X$ a clique of the $k$-supertoken graph $\mathcal{F}_k(G)$. Then, there exists a clique $K$ of $G $ such that {$|K|=|X|$}.  
\end{proposition}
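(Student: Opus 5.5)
We argue by strong induction on $|X|$, using Proposition \ref{prop:types} for the step $|X|=3$. If $|X|\le 2$ the claim is trivial: a single vertex of ${\cal F}_k(G)$ gives a vertex of $G$, and an edge $AB$ of ${\cal F}_k(G)$ comes from moving one token along an edge $ab\in E(G)$. So assume $|X|=m\ge 3$. Every triple in $X$ is a $3$-clique of ${\cal F}_k(G)$, so by Proposition \ref{prop:types} each triple is of Type 1 or Type 2. Fix two vertices $A,B\in X$ with $A=D\cup\{a\}$ and $B=D\cup\{b\}$, where $D$ is a multiset of size $k-1$ and $ab\in E(G)$.

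The key step is a structural dichotomy for the whole clique. Either \emph{(I)} every $C\in X$ contains $D$, i.e.\ $C=D\cup\{c\}$, or \emph{(II)} every $C\in X$ is contained in the multiset $A\cup B=D\cup\{a,b\}$ of size $k+1$. These correspond to the Type 1 and Type 2 triples. To prove it, suppose $C,C'\in X\setminus\{A,B\}$ with $C$ of Type 1 relative to $\{A,B\}$ and $C'$ of Type 2, say $C'=(D\setminus\{d\})\cup\{a,b\}$ for some $d\in D$. Write $C=D\cup\{c\}$ with $c\notin\{a,b\}$. The multisets $C$ and $C'$ share at most $k-2$ elements: removing $d$ from $D$ and adding two elements different from $c$ leaves $|C\cap C'|=k-1-1=k-2$ unless $c\in\{a,b\}$. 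Hence $C$ and $C'$ are not adjacent, a contradiction. This multiset counting is the step I expect to need the most care, especially when $D$ contains repeated elements or $d\in\{a,b\}$. The point to check is that $c\in\{a,b\}$ would force $C\in\{A,B\}$.

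In case (I), the vertices $c_C$ for $C\in X$ are pairwise distinct, since the $C$ are distinct. For $C\ne C'$, adjacency of $C$ and $C'$ means a single token moves from $c_C$ to $c_{C'}$, so $c_Cc_{C'}\in E(G)$. Thus $K=\{c_C: C\in X\}$ is a clique of $G$ with $|K|=m$. In case (II), each $C\in X$ is obtained from the multiset $M=D\cup\{a,b\}$ of size $k+1$ by deleting one element $x_C$, and distinct $C$ give distinct deleted vertices $x_C$. For $C\ne C'$ we have $C=(M\setminus\{x_C\})$ and $C'=(M\setminus\{x_{C'}\})$, so passing from $C$ to $C'$ moves a token from $x_{C'}$ to $x_C$. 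Adjacency therefore forces $x_Cx_{C'}\in E(G)$, and $K=\{x_C: C\in X\}$ is a clique of size $m$.

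Since (I) and (II) are exhaustive, the proof is complete, and the induction is not really needed once the dichotomy is established. Finally, we check that in case (II) the elements $x_C$ really are distinct vertices of $G$ and not merely distinct multiset positions. This holds because deleting copies of the same vertex from $M$ yields the same multiset, so distinct $C$ must come from deleting distinct vertices.
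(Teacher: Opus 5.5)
Your proof is correct and follows essentially the same route as the paper. You fix $A=D\uplus\{a\}$ and $B=D\uplus\{b\}$, show the whole clique is uniformly of Type 1 ($D\subset C$ for all $C\in X$) or uniformly of Type 2 ($C\subset D\uplus\{a,b\}$ for all $C\in X$), and then read off $K$ as the moved tokens or the deleted elements; this supplies the no-mixing argument that the paper only asserts, and the one case you flag without closing, $d\in\{a,b\}$, is harmless because it would give $C'\in\{A,B\}$ (equivalently, $\{a,b,d\}$ is a triangle of $G$ by Proposition~\ref{prop:types}).
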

\begin{proof}
The proof goes along the same lines of reasoning in 
Fabila-Monroy,  Flores-Pe\~{n}aloza, Huemer, Hurtado, Urrutia, and  Wood \cite[Th. 4]{ffhhuw12}, but working with multisets instead of sets. More precisely, the clique $X$ is such that there exists a multiset $S$ of elements in  $V(G)$ and a set $K\subset V(G)$ such that, 
either
\begin{itemize}
    \item[$(i)$] $X=\{S\uplus \{v\}: v\in K\}$ and $|S|=k-1$,
    \item[$(ii)$] $X=\{(S\uplus K)\setminus  \{v\}: v\in K\}$ and $|S|+|K|=k+1$.
    \end{itemize}
    (The `$\uplus$' symbol combines multisets, counting repeated occurrences of elements from both multisets.)
    In the first case, all the triangles induced by three vertices of $X$ are of Type 1, as described in Proposition~\ref{prop:types}, whereas, in the second case, all the triangles are of Type 2. More precisely, it can be proved that, if $X=\{A_1,A_2,\ldots,A_q\}$ is a clique in ${\cal F}_k(G)$, then, either
    \begin{itemize}
    \item[$(i)$]
       $A_1\cap A_2\subset A_i$ for every $i=1,2,\ldots,q$ (Type 1),
       \item[$(ii)$]
       $A_i\subset A_1\cup A_2$ for every $i=1,2,\ldots,q$ (Type 2).
    \end{itemize}
\end{proof}

 \begin{theorem}
 \label{th:omegas}
     $\omega ({\cal F}_k(G))= \omega(G)$.
 \end{theorem}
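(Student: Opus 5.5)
We prove the two inequalities $\omega({\cal F}_k(G))\ge\omega(G)$ and $\omega({\cal F}_k(G))\le\omega(G)$ separately.

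\emph{Lower bound.} We give an explicit construction of Type 1. Let $K=\{v_1,\ldots,v_\omega\}$ be a maximum clique of $G$, and fix any multiset $S$ of $k-1$ vertices of $G$. For instance, take $k-1$ tokens on $v_1$, or on any fixed vertex. Consider the $\omega(G)$ vertices $S\uplus\{v_i\}$ of ${\cal F}_k(G)$, for $i=1,\ldots,\omega$.
\begin{itemize}
\item These multisets are pairwise distinct, since their last elements differ.
\item Any two of them, $S\uplus\{v_i\}$ and $S\uplus\{v_j\}$, are adjacent in ${\cal F}_k(G)$. Indeed, one passes from one to the other by moving a single token along the edge $v_iv_j\in E(G)$, while the other $k-1$ tokens stay fixed.
\end{itemize}
Hence ${\cal F}_k(G)$ contains a clique of size $\omega(G)$. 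This step also covers the degenerate case $\omega(G)=1$, where $G$ has no edges.

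\emph{Upper bound.} Let $X$ be a maximum clique of ${\cal F}_k(G)$. By Proposition~\ref{propo:cliqueK}, there is a clique $K$ of $G$ with $|K|=|X|$, so $\omega({\cal F}_k(G))=|X|\le\omega(G)$. Combining the two inequalities gives the equality.

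\emph{Main obstacle.} All the real content lies in Proposition~\ref{propo:cliqueK}, namely that every clique of ${\cal F}_k(G)$ is uniformly of Type 1 or uniformly of Type 2. If I had to justify it rather than cite it, I would argue as follows.

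\begin{itemize}
\item \textbf{Setup.} Fix two members $A_1,A_2\in X$. Write $A_1=T\uplus\{a\}$ and $A_2=T\uplus\{b\}$, where $|T|=k-1$ and $ab\in E(G)$.

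\item \textbf{Each further member is classified by Proposition~\ref{prop:types}.} Apply that proposition to each triple $\{A_1,A_2,A_i\}$. Then either $A_i=T\uplus\{c_i\}$ (Type 1), or $A_i=(T\setminus\{t_i\})\uplus\{a,b\}$ with $t_i\in T$ (Type 2).

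\item \textbf{The types cannot mix.} Suppose $A_i=T\uplus\{c\}$ is of Type 1 and $A_j=(T\setminus\{t\})\uplus\{a,b\}$ is of Type 2. Compare them as multisets: $A_j$ contains $a$ and $b$, neither of which occurs in $A_i$ outside $T$. So the two multisets differ in two tokens, unless $c\in\{a,b\}$. But $c\in\{a,b\}$ is impossible, because it would make $A_i$ equal to $A_1$ or to $A_2$. Hence $A_i$ and $A_j$ are not adjacent, a contradiction.

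\item \textbf{Case $X$ uniformly Type 1.} The $c_i$ are distinct and pairwise adjacent in $G$, together with $a$ and $b$. This gives a clique $\{a,b,c_3,\ldots\}$ of size $|X|$.

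\item \textbf{Case $X$ uniformly Type 2.} The removed elements $t_i$ must be distinct as occurrences, since otherwise two members of $X$ would coincide. Adjacency of the members with $A_1$, $A_2$ and with each other forces $\{a,b,t_3,\ldots\}$ to be pairwise adjacent in $G$.

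\item \textbf{Repeated tokens.} The delicate point is that $T$ is a multiset, so some $t_i$ could equal each other or equal $a$ or $b$. If two $t_i$ are equal, the corresponding members of $X$ coincide. If some $t_i$ equals $a$ or $b$, then the move from $A_1$ (or $A_2$) would be along a loop, which cannot occur since $G$ is simple. Checking these multiset coincidences is the step I expect to require the most care.
\end{itemize}
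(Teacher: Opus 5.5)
Your proof is correct and is essentially the paper's. The lower bound comes from the copy of $G$ inside $\mathcal{F}_k(G)$ obtained by freezing $k-1$ tokens; your $S\uplus\{v_i\}$ is exactly that copy restricted to a maximum clique. The upper bound is a direct citation of Proposition~\ref{propo:cliqueK}, as in the paper. Your supplementary justification of that proposition is also sound and more detailed than the paper's own sketch: you classify each $A_i$ relative to $A_1,A_2$, rule out mixed types, and note that $t_i\notin\{a,b\}$ because otherwise $A_i\in\{A_1,A_2\}$.
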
    
 \begin{proof}
 For the lower bound, we observe that $G$, which is isomorphic to $\cal{F}_1(G)$, is contained in any $k$-supertoken graph of $G$. Therefore, we have $\omega (\cal{F}_k (G))\geq\omega (G)$. {Moreover, the upper bound  $\omega (\cal{F}_k (G))\leq\omega (G)$
 follows from Proposition \ref{propo:cliqueK}.}
\end{proof}

\begin{figure}[t]
	\begin{center}
	\includegraphics[width=8cm]{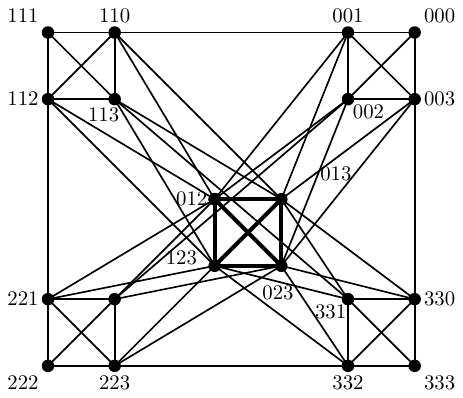}
	\end{center}
	\caption{The supertoken graph $\cal{F}_3^3(K_4)$ 
    of Figure \ref{k4_token_supertoken} with clique number $\omega(\cal{F}_3^3(K_4))=\omega(K_4)=4$. {In a thick line, there is the 4-clique of Type 2}, {whereas the other four 4-cliques are of Type 1.}}
	\label{fig:F_2^2(K4)}
\end{figure}

The clique number is closely related to several coloring parameters of a graph. 
In particular, the clique number $\omega(G)$ provides a natural lower bound for 
the chromatic number $\chi(G)$, since the vertices of any clique must receive 
distinct colors in every proper coloring. Furthermore, the chromatic number is 
bounded above by the achromatic number $\psi(G)$, which yields the well-known 
inequalities
$$\omega(G) \le \chi(G) \le \psi(G).$$

\section{{Chromatic number of supertoken graphs}}
\label{sec:chrom}

In the case of token graphs, it was shown in {Fabila-Monroy, Flores-Pe\~naloza, Huemer, Hurtado, Urrutia, and  Wood \cite{ffhhuw12}(Theorems 6 \& 7)} that the chromatic number of token graphs satisfies the bound{s}
\[
\left(\frac{1}{2}+\frac{2}{n}\right)\chi(G)-1 \leq \chi(F_k(G)) \leq \chi(G)
\]
for all $k \ge 1$.

{For supertoken graphs, we have a different situation, as shown in the following results.}
\begin{proposition}
\label{upper-bound}
    If a graph $G$ has a (vertex-)coloring with chromatic {number}  $\chi$, then the supertoken graph ${\cal F}_k(G)$ has chromatic {number}  $\chi'\le \chi$.
\end{proposition}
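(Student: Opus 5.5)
The plan is to build an explicit proper coloring of ${\cal F}_k(G)$ with $\chi$ colors from a $\chi$-coloring of $G$. The natural tool is a ``sum of colors modulo $\chi$'' construction, the standard trick for token graphs, and multisets cause no difficulty. Let $c:V(G)\to\Z_\chi$ be a proper coloring of $G$. For a vertex $\vu=(u_1,\ldots,u_n)$ of ${\cal F}_k(G)$, i.e.\ a multiset of $k$ vertices of $G$ in which vertex $i$ appears $u_i$ times, define
$$
f(\vu)=\sum_{i=1}^{n} u_i\, c(i) \pmod{\chi}.
$$
Thus $f(\vu)\in\Z_\chi$ is the sum, counted with multiplicity, of the colors of the vertices occupied by the tokens.

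The key step is to check that $f$ is proper. Suppose $\vu\sim\vv$ in ${\cal F}_k(G)$. By definition, $\vv$ is obtained from $\vu$ by moving one token from some vertex $a$ to a neighbour $b$, with $ab\in E(G)$. Then $u_a\ge1$, $v_a=u_a-1$, $v_b=u_b+1$, and all other coordinates agree, so
$$
f(\vv)-f(\vu)\equiv c(b)-c(a)\pmod{\chi}.
$$
Since $c$ is proper and $ab$ is an edge, $c(a)\ne c(b)$ as elements of $\Z_\chi=\{0,\ldots,\chi-1\}$. Because both values lie in $\{0,\ldots,\chi-1\}$, their difference is a nonzero integer of absolute value less than $\chi$, hence nonzero modulo $\chi$. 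Therefore $f(\vu)\ne f(\vv)$, and $f$ is a proper $\chi$-coloring of ${\cal F}_k(G)$, giving $\chi'\le\chi$.

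The only point needing care is this last step: one must use that the colors are genuine residues $0,\ldots,\chi-1$, not arbitrary labels, so that distinct colors differ by a non-multiple of $\chi$. Multiplicities (several tokens on one vertex) do not affect the argument, since a single move changes exactly one token's contribution. Finally, combining with Theorem~\ref{th:omegas}, $\omega(G)=\omega({\cal F}_k(G))\le\chi'\le\chi$, and because $G\cong{\cal F}_1(G)$ is a subgraph of ${\cal F}_k(G)$ we in fact get $\chi'\ge\chi$ as well, so equality holds. This remark is not needed for the stated inequality.
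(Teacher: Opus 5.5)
Your proof is correct and is essentially the paper's argument: color each multiset by the sum, with multiplicity, of the colors of its tokens modulo $\chi$, and observe that a single token move along an edge $ab$ changes this sum by $c(b)-c(a)\not\equiv 0 \pmod{\chi}$. Your closing remark on equality also matches the paper's subsequent theorem, which obtains the reverse inequality from the copy of $G$ contained in ${\cal F}_k(G)$.
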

\begin{proof}
    Let $c:V(G)\rightarrow \{0,1,\ldots,\chi-1\}$ be a coloring of $G$. Then, to each vertex $A$ of ${\cal F}_k(G)$, we assign the color
    $$
    c'(A)=\left(\sum_{v\in A} c(v)\right) {(\text{mod } \chi)}.
    $$
   If $A$ and $B$ are adjacent vertices in ${\cal F}_k(G)$, then there exists adjacent vertices $u,v\in V$ such that, with self-explanatory notation, $A=S+u$ and $B=S+v$, where $|S|=k-1$. So, since $c(u)\not= c(v) \ {(\text{mod } \chi)}$, we have
    $$
    c'(A)=\left(\left(\sum_{x\in S} c(x)\right)+c(u)\right) {(\text{mod } \chi)} \not= \left(\left(\sum_{x\in S} c(x)\right)+c(v)\right) {(\text{mod } \chi)} =c'(B).
    $$
    and $c'$ is a $\kappa'$-coloring
    of ${\cal F}_k(G)$ for some $\kappa'\le \chi$. 
\end{proof}
In particular, if  $\chi(G)=\omega(G)$, then we have $\chi({\cal F}_k (G)) \ge \omega({\cal F}_k (G)) = \omega(G) = \chi(G)$ by Theorem \ref{th:omegas}. {So, in this case, $\chi(\purple{{\cal F}}_k(G)) =\chi(G)$.}

\begin{theorem}
    Given a graph $G=(V,E)$ with chromatic number $\chi(G)$, the supertoken graph ${\cal F}_k(G)$ has chromatic number $\chi(\purple{{\cal F}}_k(G)) =\chi(G)$.
\end{theorem}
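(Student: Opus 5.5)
Both inequalities come from the structure already established. For the upper bound we invoke Proposition~\ref{upper-bound}. Fix an optimal coloring $c:V(G)\to\mathbb{Z}_\chi$ and color each multiset $A$ by $c'(A)=\sum_{v\in A}c(v) \pmod{\chi}$. This is a proper coloring of ${\cal F}_k(G)$ with at most $\chi(G)$ colors. Hence $\chi({\cal F}_k(G))\le\chi(G)$.

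For the lower bound we avoid the clique number, since Theorem~\ref{th:omegas} only gives $\chi({\cal F}_k(G))\ge\omega(G)$, which can be strictly smaller than $\chi(G)$. Instead we find a copy of $G$ inside ${\cal F}_k(G)$. Fix any vertex $x\in V(G)$ and let $S$ be the multiset consisting of $k-1$ copies of $x$. Define $\varphi:V(G)\to V({\cal F}_k(G))$ by $\varphi(v)=S\uplus\{v\}$. This map is injective. If $uv\in E(G)$, then $\varphi(u)$ and $\varphi(v)$ differ by moving one token along the edge $uv$, so they are adjacent. (Non-edges may or may not map to non-edges, but we do not need that.) Thus $G$ is isomorphic to a subgraph of ${\cal F}_k(G)$; this is the embedding of ${\cal F}_1(G)\cong G$ already used in the proof of Theorem~\ref{th:omegas}. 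Chromatic number is monotone under taking subgraphs, since restricting a proper coloring of ${\cal F}_k(G)$ to $\varphi(V(G))$ gives a proper coloring of $G$. Therefore $\chi({\cal F}_k(G))\ge\chi(G)$.

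Combining the two inequalities gives $\chi({\cal F}_k(G))=\chi(G)$, for every $k\ge1$ and every graph $G$. The only delicate point is the lower bound, namely checking that $\varphi$ preserves adjacency and is injective. With a single moving token and the other $k-1$ tokens fixed, this is immediate from the definition of ${\cal F}_k(G)$. No spectral or counting argument is required.
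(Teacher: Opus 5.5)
Your proof is correct and follows the paper's argument. The upper bound comes from the sum-mod-$\chi$ coloring of Proposition~\ref{upper-bound}, and the lower bound from a copy of $G$ inside $\mathcal{F}_k(G)$, to which any proper coloring of $\mathcal{F}_k(G)$ restricts. One small correction to your parenthetical: the copy $\{S\uplus\{v\}: v\in V(G)\}$ is in fact induced, since if $S\uplus\{u\}$ and $S\uplus\{v\}$ with $u\neq v$ are adjacent, the moved token must go from $u$ to $v$, so $uv\in E(G)$. That is how the paper phrases it, although, as you say, only the subgraph property is needed.
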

\begin{proof}
    {From Proposition \ref{upper-bound}, we only need to prove that $\chi({\cal F}_k(G))\ge \chi (G)$.
    But this is a consequence that ${\cal F}_k(G)$ contains an induced subgraph of $G$. Then, any $\chi'$-coloring of ${\cal F}_k(G)$ induces a $\kappa$-coloring of $G$ for some $\kappa\le \chi'$. }
\end{proof}

\section{The $p$-augmented 2-token graphs of cycles}
\label{sec:aug-token-cicles}

Given $n\ge 3$ and $p\ge 0$, the \textit{$p$-augmented 2-token graph} of a cycle $C_n$, denoted by  $F_2^p(C_n)$, is constructed as follows:
We start from the 2-token graph $F_2(C_n)=F_2^0(C_n)$, where the vertices $\{i,i+1\}$ are denoted by $\{i,i+1\}^0$ (all arithmetic is modulo $n$). Then, assuming that $p$ is even (the odd case is similar), we add the vertices 
$\{i,i\}^r$ for $r=1,3,\ldots,p-1$, and $\{i,i+1\}^s$ for $s=2,4,\ldots,p$, together with the following edges:
\begin{align*}
\{i,i\}^r &\sim \{i,i+1\}^{r-1}, \{i,i-1\}^{r-1},\\
\{i,i+1\}^s & \sim \{i,i\}^{s-1}, \{i+1,i+1\}^{s-1}.
\end{align*}
{In Figure \ref{fig:F24(C5)} (left), we show the {4-}augmented 2-token $F_2^4(C_5)$ and, in Figure \ref{fig:F24(C5)} (right), the {4-}augmented 2-token $F_2^4(C_4)$, both with vertex labels $ij^k=\{i,j\}^k$.}

Some simple properties of the augmented 2-token graphs of $C_n$ are the following:
\begin{itemize}
\item 
$F_2^p(C_n)$ has $N={n\choose 2}+pn$ vertices and {$M=m{{n-2}\choose {k-1}}+2pn$} edges. 
\item 
$F_2^0(C_n)\cong F_2(C_n)$ and $F_2^1(C_n)\cong {\cal F}_2(C_n)$.
\item 
{If $n$ is even, $F_2^p(C_n)$ is a bipartite graph.}
\item 
If $p\le q$, then $F_2^{p}(C_n)$ is an induced subgraph of $F_2^{q}(C_n)$.
\item 
If $p\le q$, then the eigenvalues of $F_2^{p}(C_n)$ interlace the eigenvalues of $F_2^{q}(C_n)$.
\end{itemize}

{The construction of $F_2^p(C_n)$ sits at an interesting intersection between (standard) token graphs, configuration-space graphs, and layered state-extension models. Because we are augmenting the standard 2-token graph of a cycle with parity-stratified ``collision layers'' $(\{i,i\}^r)$ and replicated adjacency layers $(\{i,i+1\}^s)$, the object naturally encodes multi-state interactions with memory depth $(p)$. This interpretation opens several applications. So, we can interpret $F_2^p(C_n)$
as a stratified discrete configuration complex.
For more information, see Ghrist \cite{g07}.}

\begin{theorem}
The independence number of the $p$-augmented $2$-token graph of the cycle $C_n$, for $n\ge 2$ (where $C_2\cong K_2$) and different values of $p$ is shown in Table \ref{tab:alpha-augmented}.
\end{theorem}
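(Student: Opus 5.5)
The plan is to prove each entry of Table \ref{tab:alpha-augmented} by matching a lower bound, given by an explicit independent set, with an upper bound coming from a structural decomposition of $F_2^p(C_n)$. First I will record the structure of the augmentation. Call the vertices $\{i,i+1\}^0$ of $F_2(C_n)$ the \emph{boundary} vertices, and for $t\ge 1$ call layer $t$ the set $L_t$ of the $n$ vertices with superscript $t$. By the adjacency rules, $L_{t-1}\cup L_t$ induces a $2n$-cycle for $t\ge 2$: the vertices alternate $\{i,i+1\}^{s}$ and $\{i+1,i+1\}^{s\pm1}$ around the cycle. Likewise $L_1$ together with the boundary layer $L_0$ induces a $2n$-cycle. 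Hence $F_2^p(C_n)$ is $F_2(C_n)$ with a ``cylinder'' $P_{p+1}$-strip of zigzag cycles glued along the boundary $n$-cycle of near-diagonal vertices. Consistently with the earlier statement $F_2^1(C_n)\cong{\cal F}_2(C_n)$, the case $p=1$ is already settled by Theorem \ref{th:alpha(p=1)}, and $p=0$ is the known value $\alpha(F_2(C_n))=\lfloor\frac n2\lfloor\frac n2\rfloor\rfloor$ from \cite{aclr20}.

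For the lower bounds I will extend the independent sets in the proof of Theorem \ref{th:alpha(p=1)}. Take a maximum independent set $I_1$ of ${\cal F}_2(C_n)=F_2^1(C_n)$ and add every other layer, namely $L_3, L_5,\ldots$ (or $L_2,L_4,\ldots$, depending on whether $I_1$ uses all of $L_1$). Each layer in $L_t$ has neighbours only in $L_{t\pm1}$, so this union is independent as long as no two chosen layers are consecutive. This gives roughly $\alpha({\cal F}_2(C_n))+n\lfloor (p-1)/2\rfloor$. When $n$ is odd or the parities are wrong, one can sometimes gain more by splitting a zigzag $2n$-cycle between two half-layers. Since each $C_{2n}$ has independence number $n$, I will try patterns that take exactly $n$ vertices from each ``pair'' $L_{t-1}\cup L_t$, mixing the two layers, so that the count becomes $\alpha(F_2(C_n)) + n\lceil p/2\rceil$ or a shifted variant. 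The table entries should dictate which of these patterns is optimal in each residue class of $n$ mod 4 and each parity of $p$.

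For the upper bounds I will split into $n$ even and $n$ odd. If $n$ is even, $F_2^p(C_n)$ is bipartite, and I will use the argument of Theorem \ref{th:bip} and Lemma \ref{lem:Hall}. I will identify the two colour classes, check that the smaller class has all degrees $\ge\delta$ while the larger has all degrees $\le\delta$ (or find a matching directly), and conclude that $\alpha$ equals the larger class. For the matching I expect to build it layer by layer, because each zigzag cycle has a perfect matching, and then glue it to the matching of $F_2(C_n)$ from \cite{aclr20}. If $n$ is odd, I will combine a vertex partition with the known pieces. Cover $V$ by $F_2^1(C_n)\cong{\cal F}_2(C_n)$ (whose $\alpha$ is known) together with the disjoint unions $L_{2j}\cup L_{2j+1}$, each inducing a $2n$-cycle with $\alpha=n$, plus possibly one leftover layer with $\alpha\le n$. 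Summing gives $\alpha(F_2^p(C_n))\le \alpha({\cal F}_2(C_n))+n\lceil (p-1)/2\rceil$, which I expect to match the constructions in most cases.

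The main obstacle will be the cases where this additive upper bound exceeds the construction by a small amount. This happens when $n$ is odd and the parities of $p$ and $n \bmod 4$ interact badly, because the optimum in ${\cal F}_2(C_n)$ cannot simultaneously be optimal on its boundary layer and compatible with taking a full adjacent layer. For those cases I will first try the Cvetković inertia bound, as in Theorem \ref{th:alpha(p=1)}. Viewing $F_2^p(C_n)$ as a $\mathbb Z_n$-lift of a path-like base graph with $\lceil n/2\rceil+p$ vertices (the base graph of Figure \ref{fig:base-graph} extended by a pendant path of length $p-1$), the eigenvalues come from a tridiagonal matrix, and I would count its nonnegative eigenvalues for each $r$ via interlacing or Sturm sequences. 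If the inertia count is not tight, the fallback is a refined exchange argument on the boundary layers.
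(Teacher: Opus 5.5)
The gap is the upper bound for $n$ odd and $p$ even. Your partition $V(\mathcal{F}_2(C_n))\sqcup(L_2\cup L_3)\sqcup\cdots\sqcup(L_{p-2}\cup L_{p-1})\sqcup L_p$ gives $\alpha(\mathcal{F}_2(C_n))+pn/2$. The table value in these cases is $\alpha(F_2(C_n))+pn/2$. The excess is $\alpha(\mathcal{F}_2(C_n))-\alpha(F_2(C_n))$, which equals $(n-1)/2$ for $n=4r+1$ and $(n+1)/2$ for $n=4r+3$. So it is not a ``small amount''. You leave closing it to an inertia count you do not carry out (and suspect may fail), or to an unspecified exchange argument. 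As written, the entries with $n$ odd and $p\ge 2$ even are therefore not proved. The rest of your plan is sound:
\begin{itemize}
\item For even $n$, Lemma \ref{lem:Hall} applies with $\delta=4$: the smaller colour class has all degrees $4$, and the larger class has degrees $4$ and $2$.
\item For $n$ odd and $p$ odd, your partition bound $\alpha(\mathcal{F}_2(C_n))+(p-1)n/2$ already equals the table value.
\end{itemize}

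The missing idea is to change the base block when $p$ is even. Partition instead as $V(F_2(C_n))\sqcup(L_1\cup L_2)\sqcup\cdots\sqcup(L_{p-1}\cup L_p)$, with each pair inducing $C_{2n}$; this gives $\alpha(F_2^p(C_n))\le\alpha(F_2(C_n))+pn/2$. The matching lower bound is a maximum independent set of $F_2(C_n)$ together with $L_2,L_4,\ldots,L_p$. Note that you must start from $F_2(C_n)$, not from $\mathcal{F}_2(C_n)$: a maximum independent set of the latter cannot avoid $L_1$, so your ``depending on whether $I_1$ uses all of $L_1$'' does not work. For odd $p$, a maximum independent set of $\mathcal{F}_2(C_n)$ plus $L_3,L_5,\ldots,L_p$ matches your partition bound. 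Thus, for every $n$, the table reduces to $\alpha(F_2(C_n))+pn/2$ for $p$ even and $\alpha(\mathcal{F}_2(C_n))+(p-1)n/2$ for $p$ odd, i.e.\ to the known cases $p=0,1$. No Hall or spectral argument is needed for $p\ge 2$, and no zigzag-splitting patterns either; in fact, your guess $\alpha(F_2(C_n))+n\lceil p/2\rceil$ for odd $p$ exceeds your own upper bound.

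Your inertia fallback is the route the paper indicates (``along the same lines'' as Theorem \ref{th:alpha(p=1)}). It would also close the gap, but only once the count is done. The base matrix is $2\cos(\pi r/n)$ times the path on $M=\lfloor n/2\rfloor+p$ vertices (not $\lceil n/2\rceil+p$), with a loop $(-1)^r$ at one end. Its eigenvalues are $2\cos\frac{(2k-1)\pi}{2M+1}$ or $2\cos\frac{2k\pi}{2M+1}$, which are never $0$. If $M$ is even, each $r$ contributes $M/2$ eigenvalues of each sign. If $M$ is odd, the total imbalance over all $r$ is $1$. This gives the bound $\lfloor nM/2\rfloor$, which is exactly the table value for odd $n$.
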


\begin{table}[!ht]
\begin{tabular}{|c|c|c|c|c|}
\hline
$n$ 
& even $p\ge 0$ & odd $p\ge 1$ & $p=0$ & $p=1$\\
\hline \hline
$4r$ & $\left(r+\frac{p}{2}\right)n$ & $\left(r+\frac{p}{2}\right)n$ & $4r^2=\displaystyle\left\lfloor \frac{n\lfloor \frac{n}{2}\rfloor}{2}\right\rfloor$ &  $4r^2+2r=r(n+2)$ \\
$4r+1$ & $\left(r+\frac{p}{2}\right)n$ & $\left(r+\frac{p}{2}\right)n-\frac{1}{2}$ & {$4r^2+r=\displaystyle\left\lfloor \frac{n\lfloor \frac{n}{2}\rfloor}{2}\right\rfloor$} & $4r^2+3r=r(n+2)$
\\
$4r+2$ & $\left(r+\frac{p}{2}+\frac{1}{2}\right)n$ & $\left(r+\frac{p}{2}+\frac{1}{2}\right)n$ & $(2r+1)^2=\displaystyle\left\lfloor \frac{n\lfloor \frac{n}{2}\rfloor}{2}\right\rfloor$ & $4r^2+6r+2=(r+1)n$\\
$4r+3$ & $\left(r+\frac{p}{2}+\frac{1}{2}\right)n-\frac{1}{2}$ & $\left(r+\frac{p}{2}+\frac{1}{2}\right)n$ & $4r^2+5r+1{=\displaystyle\left\lfloor \frac{n\lfloor \frac{n}{2}\rfloor}{2}\right\rfloor}$ & $4r^2+7r+3=(r+1)n$\\
\hline
\end{tabular}
\vskip.5cm
\caption{The independence number of the $p$-augmented $2$-token $F_2^p(C_n)$.}
\label{tab:alpha-augmented}
\end{table}

\begin{proof}
{Notice that the case when $p=1$ corresponds to the results in Theorem \ref{th:alpha(p=1)}. {Then, the proof for a general value of $p$ goes along the same lines of reasoning as in the proof of such a theorem.}}    
\end{proof}


\begin{proposition}
Let $C_n$ be a cycle graph {with an odd number $n$ of vertices.} Then, for any integer $p\ge 0$, the spectrum of the $p$-augmented $2$-token graph $F_2^p(C_n)$ of $C_n$ contains the eigenvalues
\begin{equation}
\lambda_i=4\cos\left(\frac{(2i-1)\pi}{n+2p}\right),\quad i=1,2,\ldots,\lfloor n/2\rfloor + p.
\label{eigenvals-augmented}
\end{equation}
In particular, its spectral radius is
$$
\rho(F_2^p(C_n))=\lambda_1=4\cos\left(\frac{\pi}{n+2p}\right).
$$
\end{proposition}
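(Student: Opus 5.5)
The approach is to use the rotational symmetry $i\mapsto i+1$ (mod $n$) of $F_2^p(C_n)$. I would build an equitable partition from the orbits of this $\Z_n$-action, so that the quotient matrix is a weighted path with a loop at one end. Its eigenvalues are known explicitly, and they are eigenvalues of $F_2^p(C_n)$. This is the trivial-character ($r=0$, $z=1$) part of the lift description used in the proof of Theorem~\ref{th:alpha(p=1)}.

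\textbf{The partition.} Since $n$ is odd, every orbit has exactly $n$ vertices. In the base layer $F_2^0(C_n)$ the orbits are
$$V_d=\{\{i,i+d\}^0: i\in\Z_n\},\qquad d=1,\ldots,(n-1)/2,$$
where $d$ is the cyclic distance. The added layers $\{\{i,i\}^r\}$ and $\{\{i,i+1\}^s\}$, for $r,s=1,\ldots,p$, form $p$ further orbits. I would order all $m=(n-1)/2+p$ classes along a line: first the top layer, then down to layer $1$, then $V_1,V_2,\ldots,V_{(n-1)/2}$.

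\textbf{The quotient matrix.} Next I would count neighbours of a vertex in each class.
\begin{itemize}
\item In $F_2(C_n)$, the vertex $\{i,i+d\}$ moves one token to reach $\{i\pm1,i+d\}$ or $\{i,i+d\pm1\}$. For $2\le d\le (n-3)/2$ this gives exactly $2$ neighbours in $V_{d-1}$ and $2$ in $V_{d+1}$.
\item For $d=(n-1)/2$, the two moves that increase the difference give difference $(n+1)/2$. This again has cyclic distance $(n-1)/2$, so $V_{(n-1)/2}$ carries a loop of weight $2$.
\item For $d=1$, the two "collapsing" moves are replaced, by the definition of $F_2^p$, by the edges to $\{i,i\}^1$ and $\{i+1,i+1\}^1$. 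This gives weight $2$ towards layer $1$.
\item Each augmented vertex has exactly two neighbours in the layer below and two in the layer above, except the top layer, which has none above.
\end{itemize}
Hence the quotient matrix is $2(A(P_m)+E_{mm})$, the path on $m$ vertices with a loop at the $V_{(n-1)/2}$ end. When $p=0$ the chain starts at $V_1$. There the two collapsing moves are simply absent, so $V_1$ has only its $2$ neighbours in $V_2$ and the same path form holds.

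\textbf{Eigenvalues and spectral radius.} The matrix $A(P_m)+E_{mm}$ has eigenvalues $2\cos\bigl((2i-1)\pi/(2m+1)\bigr)$ for $i=1,\ldots,m$. This is the standard Chebyshev computation: take eigenvectors $x_j=\sin(j\theta)$, impose the boundary condition $x_{m+1}=x_m$, and obtain $\theta=(2i-1)\pi/(2m+1)$. Since $2m+1=n+2p$, multiplying by $2$ gives exactly \eqref{eigenvals-augmented}. Eigenvalues of the quotient of an equitable partition are eigenvalues of the graph, because eigenvectors lift as constant vectors on classes.

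For the spectral radius, the quotient matrix is nonnegative and irreducible, so its Perron eigenvector for $\lambda_1$ is positive. Its lift is then a positive eigenvector of the adjacency matrix of the connected graph $F_2^p(C_n)$. By Perron--Frobenius, $\lambda_1=4\cos\bigl(\pi/(n+2p)\bigr)$ is therefore the spectral radius.

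\textbf{Main obstacle.} The step needing most care is the boundary bookkeeping. I must check that both the odd-$p$ and even-$p$ top layers have no upward neighbours, and that the $V_1$-to-layer-$1$ adjacency has weight exactly $2$ in both directions. Otherwise the quotient is not symmetric, and it would have to be rescaled as in \eqref{B-star}.
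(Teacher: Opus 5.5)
Your proposal is correct and follows essentially the same route as the paper. The paper also uses the equitable partition of $F_2^p(C_n)$ into $n$-element classes, with tridiagonal quotient $2(A(P_m)+E_{mm})$, where $m=\lfloor n/2\rfloor+p$; the paper lists it with the loop in the first row. It then obtains the eigenvalues $4\cos\bigl((2i-1)\pi/(n+2p)\bigr)$ and concludes $\rho=\lambda_1$ by Perron--Frobenius from the positive lifted eigenvector. The only difference is that you derive the eigenpairs with the explicit sine ansatz, whereas the paper cites Yueh and Losonczi for them.
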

\begin{proof}
Let $n=2r+1$. Then, the $p$-augmented 2-token of $C_n$ has a regular partition shown in Figure \ref{fig:quocientF_2^p(C_{2r+1})} with quotient matrix of size $r+p$ of the form
$$
\Q=\left(
\begin{array}{cccccccc}
2      & 2      & 0      &      0 & \cdots & 0      & 0      & 0\\
2      & 0      & 2      & 0      & \cdots & 0      & 0      & 0\\
0      & 2      & 0      & 2      & \cdots & 0      & 0      & 0\\
\vdots & \vdots & \vdots & \ddots & \ddots & \ddots & \vdots & \vdots\\
0      & 0      & 0      & 0      & \cdots & 2      & 0      & 2\\
0      & 0      & 0      & 0      & \cdots & 0      & 2      & 0\\
\end{array}
\right).
$$
Then, as a tridiagonal matrix, it has the eigenvalues \eqref{eigenvals-augmented} with respective eigenvectors $\vv_i$ with entries
\begin{equation}
\vv_i(j)=\cos\left(\frac{(2i-1)(2j-1)\pi}{2(n+2p)}\right), \quad 
j=1,2,\ldots,r+ p,
\label{eigenvecs-augmented}
\end{equation} 
see the results in Yueh \cite[Th. 2]{y05}. As proved by Fonseca and Kowalenko \cite{fk20}, the first to study the eigenpairs of some tridiagonal matrices was Losonczi \cite{l92}. Yueh gave a more modern presentation of his results, but he did not cite Losonczi.
{Moreover, the eigenvector $\vv_1$ of $\lambda_1$ is positive since \eqref{eigenvecs-augmented} gives $\vv_1(j)=\cos\left(\frac{(2j-1)\pi}{2(n+2p)}\right)>0$ for 
$j=1,2,\ldots,r+p$. Thus, by the Perron-Frobenius theorem, $\lambda_1$ is the spectral radius of $F_2^p(C_n)$.}
\end{proof}

\begin{proposition}
Let $C_n$ be a cycle graph with an even number $n$ of vertices. Then, for any integer $p\ge 0$, the spectrum of the $p$-augmented $2$-token graph $F_2^p(C_n)$ of $C_n$ contains the eigenvalues that are the roots of the polynomial $\phi_r(x)$, with $r=\frac{n}{2}+p$, defined recursively
as
\begin{equation}
\phi_r(x)=x\phi_{r-1}(x)-4\phi_{r-2}(x)
\label{recur-augmented}
\end{equation}
with initial values $\phi_2(x)=x^2-8$ and $\phi_3(x)=x^3-12x$.
\end{proposition}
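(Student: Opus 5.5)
We mirror the odd case: find an equitable partition of $F_2^p(C_n)$, compute its quotient matrix, and identify the characteristic polynomial of that matrix with $\phi_r(x)$. The spectrum of the quotient matrix of a regular partition is contained in the spectrum of the graph, so this gives the claim.

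\textbf{The partition.} Let $n=2m$. Partition the vertices of $F_2^p(C_n)$ by their ``distance class'' under the rotation action of $\Z_n$:
\begin{itemize}
\item the $2$-token vertices $\{i,i+d\}^0$ for $d=2,\ldots,m$;
\item the layer vertices $\{i,i+1\}^0,\{i,i\}^1,\{i,i+1\}^2,\ldots$, up to layer $p$.
\end{itemize}
The classes form a path-like chain:
$$\{i,i+m\}^0,\ \ldots,\ \{i,i+2\}^0,\ \{i,i+1\}^0,\ \{i,i\}^1,\ \ldots,\ \text{top layer}.$$
This gives $m+p=r$ classes.

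\textbf{Checking the partition is regular.} A vertex $\{i,i+d\}^0$ with $2\le d\le m-1$ has two neighbours at distance $d-1$ and two at distance $d+1$. A vertex $\{i,i+1\}^s$ has two neighbours in each adjacent layer. A vertex $\{i,i\}^r$ has two neighbours in each adjacent layer (its neighbours in $F_2(C_n)$ or in the next layer). So all interior entries of the quotient matrix are $2$ off the diagonal and $0$ on it.

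The boundary classes are what distinguish the even case from the odd one:
\begin{itemize}
\item The antipodal class $\{i,i+m\}$ has only $n/2$ vertices instead of $n$. Each such vertex has $4$ neighbours in the class $d=m-1$, while each vertex of class $d=m-1$ has $2$ neighbours in class $m$. This gives an entry $4$ in one direction and $2$ in the other. No diagonal loop term $2$ appears, unlike the odd case.
\item The top layer vertices have only $2$ neighbours, both in the layer below. This gives a last row $(\ldots,2,0)$.
\end{itemize}
So $\Q$ is tridiagonal with zero diagonal and off-diagonal products $b_jc_j=4$, except one end pair with product $8$. The exact placement of the $4$ versus the $2$ only affects the symmetrization: that pair becomes $2\sqrt2$.

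\textbf{Deriving the recurrence.} Expand $\det(xI-\Q)$ along the row of the top layer, the end with ordinary entries. Each expansion step gives $\phi_r=x\phi_{r-1}-4\phi_{r-2}$, where $\phi_j$ is the characteristic polynomial of the leading $j\times j$ block containing the special antipodal end. For the initial values:
\begin{itemize}
\item The $2\times2$ block has off-diagonal product $8$, so its characteristic polynomial is $x^2-8$.
\item The $3\times3$ block has characteristic polynomial $x(x^2-4)-8x=x^3-12x$.
\end{itemize}
These match the stated $\phi_2$ and $\phi_3$.

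\textbf{Main obstacle.} The main difficulty is the bookkeeping at the antipodal end: confirming the class size $n/2$ and the entries $4$ and $2$. I would check this on $F_2^4(C_4)$ from Figure~\ref{fig:F24(C5)} to make sure no diagonal term arises there.

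I would also handle small $m$ separately, for instance $n=4$, where $\{i,i+2\}$ is already antipodal. This checks that the recursion indexing starting at $r=2$ is consistent.
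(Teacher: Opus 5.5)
Your proposal is correct and is essentially the paper's proof. You partition $F_2^p(C_n)$ into rotation orbits (the antipodal class of size $n/2$, the distance classes, then the layers up to $p$); this partition is equitable with quotient matrix exactly $\Q_r$, and expanding $\det(x\I-\Q_r)$ along the top-layer end gives $\phi_r=x\phi_{r-1}-4\phi_{r-2}$ with $\phi_2=x^2-8$ and $\phi_3=x^3-12x$. Your explicit check of the antipodal entries $4$ and $2$ (and the absence of a diagonal term) is more careful than the paper, which writes down $\Q_r$ without verifying it.
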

\begin{proof}
In this case, the quotient matrix is 
$$
\Q_r=\left(
\begin{array}{cccccccc}
0 & 4 & 0 & 0 & \cdots & 0 & 0 & 0\\
2 & 0 & 2 & 0 & \cdots & 0 & 0 & 0\\
0 & 2 & 0 & 2 & \cdots & 0 & 0 & 0\\
\vdots & \vdots & \vdots & \ddots & \ddots & \ddots & \vdots & \vdots\\
0 & 0 & 0 & 0 & \cdots & 2 & 0 & 2\\
0 & 0 & 0 & 0 & \cdots & 0 & 2 & 0\\
\end{array}
\right),
$$
with characteristic polynomials $\phi_2(x)$ and $\phi_3(x)$, as indicated above. Then, the three-term recurrence is obtained by developing the determinant $|x\I-\Q_r|$ by the elements of the last column. For instance, for $r=5$, we have
\begin{align*}
\phi_5(x) & =
\left|
\begin{array}{ccccc}
     x & -4 & 0 & 0 & 0  \\
    -2 & x & -2 & 0 & 0 \\
    0 & -2 & x & -2 & 0 \\
    0 & 0 & -2 & x & -2\\
    0 & 0 & 0 & -2 & x
\end{array}
\right| = x\phi_{4}(x)+2\left|
\begin{array}{cccc}
     x & -4 & 0 & 0 \\
    -2 & x & -2 & 0 \\
    0 & -2 & x & -2 \\
    0 & 0 & 0 &  -2\\
\end{array}
\right|\\
&=x\phi_{4}(x)-4
\left|
\begin{array}{ccc}
     x & -4 & 0 \\
    -2 & x & -2 \\
    0 & -2 & x 
\end{array}
\right|
=x\phi_{4}(x)-4\phi_3(x).
\end{align*}
\end{proof}
In particular, the spectral radius of $\Q_r$ with positive eigenvector, or maximum root of $\phi_r(x)$, gives the spectral radius of $F_2^p(C_n)$. For instance, in Table \ref{tab:spec-rad-even} there is the spectral radius  of the $p$-augmented 2-token graph  $F_2^p(C_n)$  for even $n$ and $r=\frac{n}{2}+p=2,\ldots,7$.
\begin{table}[t]
  \begin{center}
			\begin{tabular}{|c|cccccc|}
				\hline
				$r=\frac{n}{2}+p$   & $2$ & $3$ & $4$ & $5$ & $6$ & $7$\\
				\hline
				$\rho(\Q_r)=\rho(F_2^p(C_n))$ & $2.8284$ & $3.4641$ & $3.6955$ &  $3.8042$ & $3.8637$ & $3.8997$ \\
				\hline
			\end{tabular}
            \vskip .5cm
            \caption{The spectral radius  of the $p$-augmented 2-token graph  $F_2^p(C_n)$  for even $n$.}
            \label{tab:spec-rad-even}
		\end{center}
\end{table}

 \begin{figure}[t]
	\begin{center}
\includegraphics[width=15cm]{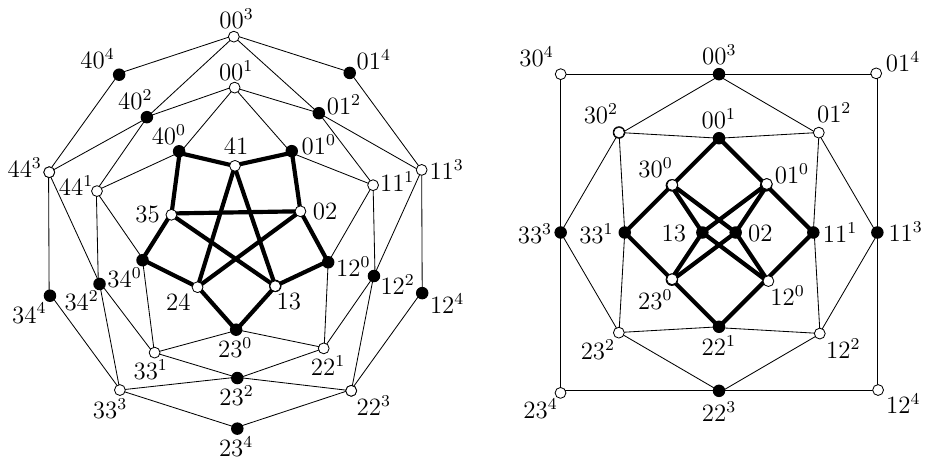}
	\end{center}
	\caption{{Left: The 4-augmented 2-token graph $F_2^4(C_5)$ of the 5-cycle with independent number $\alpha(F_2^4(C_5))=15$ (the white vertices). Right: 
    The 4-augmented 2-token graph $F_2^4(C_4)$ of the 4-cycle with independent number $\alpha(F_2^4(C_4))=12$ (the white vertices).
    In a thick line, there are the edges of $F_2(C_5)$ and $F_2(C_4)$, respectively.}}
	\label{fig:F24(C5)}
\end{figure}

 \begin{figure}[t]
	\begin{center}
\includegraphics[width=15cm]{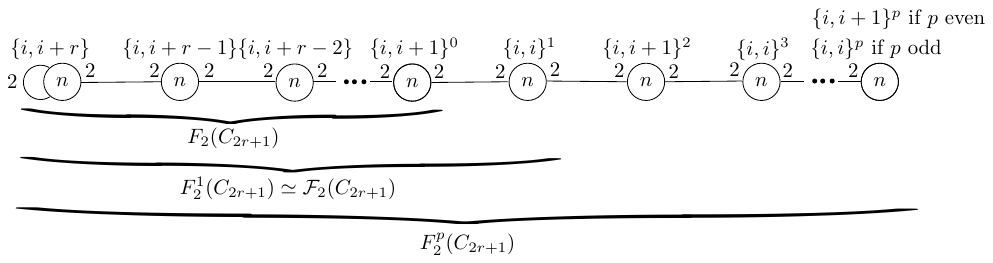}
	\end{center}
	\caption{The quotient graph of $F_2^p(C_{2r+1})$.}
	\label{fig:quocientF_2^p(C_{2r+1})}
\end{figure}
 

 


\subsection*{Acknowledgments} 
This research work has been funded by AGAUR, the Catalan Government, under project 2021SGR00434, and by MICINN, the Spanish Government, under project PID2020-115442RB-I00.
M. A. Fiol's research is also supported by a grant from the Universitat Polit\`ecnica de Catalunya with reference AGRUPS-2025. 

\subsection*{Conflict of interest}
The authors declare that they have no conflict of interest. 

\subsection*{Data availability statement} 
All the data generated or analyzed during this study are included in this article.


\end{document}